\documentclass[oneside,11pt]{amsart}
\usepackage[utf8]{inputenc}
\usepackage[english]{babel}
\usepackage{libertine}
\usepackage{floatflt}
\usepackage{comment}
\usepackage{blindtext}
\usepackage{enumitem}
\usepackage{amsthm}
\usepackage{subcaption}
\usepackage{listings}
\usepackage{amscd}
\usepackage{centernot}
\usepackage{mathtools}
\usepackage{stmaryrd}
\usepackage{listingsutf8}
\usepackage{setspace}
\usepackage{amsmath}
\usepackage{framed}
\usepackage{minibox}
\usepackage{float}
\usepackage{wrapfig}
\usepackage{longtable}
\usepackage[strict]{changepage}
\usepackage{nicefrac}
\usepackage{bbm}
\usepackage{amssymb}
\usepackage{amsthm}
\usepackage{mathtools}
\usepackage{amsfonts}
\usepackage{mathrsfs}
\usepackage{todonotes}
\usepackage{esvect}
\usepackage{scalerel}
\usepackage{stackengine,wasysym}

\usepackage[toc,page]{appendix}
\usepackage{ragged2e}

\makeatletter
\renewenvironment{thebibliography}[1]
     {\section*{\refname}%
      \normalfont\normalsize % <-- mantiene la dimensione normale
      \list{\@biblabel{\@arabic\c@enumiv}}%
           {\settowidth\labelwidth{\@biblabel{#1}}%
            \leftmargin\labelwidth
            \advance\leftmargin\labelsep
            \usecounter{enumiv}}%
      \sloppy\clubpenalty4000\widowpenalty4000%
      \sfcode`\.=\@m}
     {\def\@noitemerr{\@latex@warning{Empty `thebibliography' environment}}%
      \endlist}
\makeatother
\usepackage{tikz-cd}
\usetikzlibrary{positioning, angles}
\usetikzlibrary{arrows}
\usetikzlibrary{decorations.pathmorphing, decorations.pathreplacing}
\usetikzlibrary{decorations.markings}
\usepackage{mathtools}
\usepackage{lipsum}
\usepackage{xfp}

\usepackage{tikz}
\usepackage{tikz-3dplot}
\usetikzlibrary{3d, angles, arrows.meta, calc, decorations.markings,
                decorations.pathmorphing, decorations.pathreplacing,
                positioning, shapes,shadings}
\usepackage{pgfplots}
\usepackage{pgfmath}
\usepackage{xfp}

\usepackage{hyperref}
\tikzset{
  on each segment/.style={
    decorate,
    decoration={
      show path construction,
      moveto code={},
      lineto code={
        \path [#1]
        (\tikzinputsegmentfirst) -- (\tikzinputsegmentlast);
      },
      curveto code={
        \path [#1] (\tikzinputsegmentfirst)
        .. controls
        (\tikzinputsegmentsupporta) and (\tikzinputsegmentsupportb)
        ..
        (\tikzinputsegmentlast);
      },
      closepath code={
        \path [#1]
        (\tikzinputsegmentfirst) -- (\tikzinputsegmentlast);
      },
    },
  },
  mid arrow/.style={postaction={decorate,decoration={
        markings,
        mark=at position .5 with {\arrow[#1]{stealth}}
      }}},
}
\usepackage[toc,page]{appendix}
\usepackage{ragged2e}
\usepackage{fancyhdr}

\usepackage{fancyhdr}\oddsidemargin = 2pt       \topmargin = 0pt\headheight = 12pt          \headsep = 25pt\textheight = 609pt         \textwidth = 424pt\marginparsep = 11pt        \marginparwidth = 54pt\footskip = 30pt            \marginparpush = 5pt \hoffset = 0pt              \voffset = 0pt\paperwidth = 597pt         \paperheight = 800pt
\def\Xint#1{\mathchoice
  {\XXint\displaystyle\textstyle{#1}}% 
  {\XXint\textstyle\scriptstyle{#1}}% 
  {\XXint\scriptstyle\scriptscriptstyle{#1}}% 
  {\XXint\scriptscriptstyle\scriptscriptstyle{#1}}% 
  \!\int}
\def\XXint#1#2#3{{\setbox0=\hbox{$#1{#2#3}{\int}$}
  \vcenter{\hbox{$#2#3$}}\kern-.5\wd0}}
\def\dashint{\Xint-}
\usepackage[normalem]{ulem}

\newcommand{\R}{\ensuremath{\mathbb{R}}}

\newcommand{\norm}[1]{{\ensuremath{||{#1}||}}}

\let\theta\vartheta
\let\phi\varphi
\let\epsilon\varepsilon
\let\bar\overline

\theoremstyle{plain}
\newtheorem{thm}{Theorem}[section]
\newtheorem*{thm*}{Theorem}
\newtheorem{prop}[thm]{Proposition}

\newtheorem{lem}[thm]{Lemma}

\theoremstyle{definition}
\newtheorem*{defn*}{Definition}

\newtheorem{oss}[thm]{Remark}

\newread\tmp

\theoremstyle{definition} % testo in romano

\usepackage{etoolbox}
\makeatletter
\providecommand{\subtitle}[1]{% add subtitle to \maketitle
  \apptocmd{\@title}{\vspace*{0.2cm}\par {\small\normalfont #1 \par}}{}{}
}
\makeatother
\usepackage{tcolorbox}
\counterwithout*{equation}{section}
\def\pri{\hbox to 10pt{\hfil\hbox to 0.4pt{\vrule height5pt width0.4pt
                 depth0pt}\vrule width5pt height0.4pt depth0pt\hfil}}
\usepackage{units}

\addto\captionsenglish{}
\makeatletter
\newcommand{\tpitchfork}{%
  \vbox{
    \baselineskip\z@skip
    \lineskip-.52ex
    \lineskiplimit\maxdimen
    \m@th
    \ialign{##\crcr\hidewidth\smash{$-$}\hidewidth\crcr$\pitchfork$\crcr}
  }%
}
\makeatother

\newcommand{\gu}{{\bf u}}
\newcommand{\gc}{{\bf c}}
\usepackage{xcolor}
\usepackage{graphicx}
\usepackage{listings}
\usepackage{inconsolata}  % Font monospazio (opzionale)
\numberwithin{equation}{section}

\date{}
\usepackage{tikz}
\usetikzlibrary{3d, calc, shadings}

\usepackage{marginnote}
\usepackage{tikz}
\usetikzlibrary{angles,quotes,arrows.meta,calc,intersections}

\title[Elastic energy of curves in Riemannian surfaces]{Weak elastic energy of rectifiable curves in Riemannian surfaces}

\author[D. Mucci]{Domenico Mucci}
\address{Domenico Mucci\\ Dipartimento  di Scienze Fisiche, Informatiche e Matematiche, Università degli studi di Modena e Reggio Emilia\\
Via G. Campi 213/A, 41125 Modena, Italy
}
\email{\url{domenico.mucci@unimore.it}}

\author[A. Saracco]{Alberto Saracco}
\address{Alberto Saracco\\ Dipartimento di Scienze Matematiche, Fisiche e Informatiche, Universit\`a di Parma\\ Campus - Parco Area delle Scienze 53/A, 43124 Parma, Italy}
\email{\url{alberto.saracco@unipr.it}}

\author[C. Sopio]{Cristian Sopio}
\address{Cristian Sopio\\Dipartimento di Matematica e Informatica, Università degli Studi di Ferrara \\ \newline
Via Machiavelli 35, 44121 Ferrara, Italy}  \email{\url{cristian.sopio@unife.it}}
\keywords{Irregular curves, curvature, elastic energy, relaxation}
\subjclass{53A04, 49J45}
\makeatother
\begin{document}
\nocite{MS2}
\onehalfspacing
\maketitle
\begin{abstract}
We introduce a weak elastic energy for rectifiable curves on compact orientable smooth
  Riemannian surfaces without boundary.
The energy is defined by relaxation starting from a notion of $p$-rotation of inscribed geodesic polygonals, that is obtained by a local construction in normalized isothermal coordinates.
For every exponent $p>1$, the resulting relaxed functional detects precisely the intrinsic second-order Sobolev regularity of the arc-length parameterization of the curve.
Furthermore, when the relaxed energy is finite, it agrees with the integral of the $p$-power of the geodesic curvature.
\end{abstract}
\section{Introduction}
Let $(S,g)$ be a Riemannian surface of class $C^2$, that is assumed to be compact, orientable, and without boundary.
In this paper we introduce a relaxed notion of elastic energy for rectifiable curves $c$ supported in $S$. Extending results obtained in \cite{MSS25} in the case where $S$ is the unit 2-sphere $\mathbb S^2$ in $\mathbb R^3$, we show that for every real exponent $p>1$, our $p$-energy is finite if and only if the arc-length parameterization of $c$ is in the second order Sobolev class $W^{2,p}$, and in that case it agrees with the integral of the $p$-power of the geodesic curvature.

In order to state our main result, we sketch here the main tools of our construction, and we first recall that for any real exponent $p\geq 1$, the total $p$-curvature of a smooth rectifiable curve $c$ in $S$ is given by the integral
\begin{equation}
\label{eq:p-integral}
\int_c |k_S(c)|_g^p\,ds \,,
\end{equation}
where $k_S(c)$ is the curvature vector of $c$ and $|k_s(c)|_g$ is its norm with respect to the metric $g$.

For non-smooth rectifiable curves $c$, our definition of $p$-energy is based on a relaxation process.
The main task, in the elastic case $p>1$ considered here, is to give a robust notion of {\em $p$-energy} for polygonal curves $P$ in $S$, with the aim of spreading the curvature at the edges along a piecewise curve $\gamma(P)$ inscribed in $P$.

For equilateral polygonal curves $P$ into the 2-sphere, this was done in \cite{MSS25} by connecting the middle points of consecutive segments of $P$ with arcs of constant geodesic curvature satisfying a first order condition at the end points.
However, for a Riemannian surface as above, an explicit formula of $\gamma(P)$ fails to hold.

To overcome this difficulty we assume that the mesh of $P$ is small, and we make use of local isothermal coordinates $\varphi_p$ centered at every internal vertex $p$ of $P$.
We then analyze the arcs $\gamma_1$, $\gamma_2$ in $\mathbb R^2$ obtained by projecting half of the two geodesic segments of $P$ insisting at the vertex $p$.

The main issue is that we are able to control their length and the angle between the directions at the end points in terms of the side length of $P$ and of the angle of $P$ at the edge point $p$.

This feature allows us to construct a suitable  arc $\Gamma$ connecting to the end points of $\gamma_i$ at first order, in such a way that its total curvature is comparable to the turning angle, i.e., the angle in $(0,\pi)$ between the inward and outward tangent direction of $P$ at $p$. Then, the pull-back of $\Gamma$ by $\varphi_p$ defines a portion of the piecewise smooth curve $\gamma(P)$ which lies in $S$ and is inscribed in  $P$.

We thus call {\em $p$-rotation} $\mathbf{k}_p(P)$ of $P$ the total $p$-curvature of its inscribed curve $\gamma(P)$, that is
\begin{equation}
\label{p-rotation}
\mathbf{k}_p(P) :=\int_{\gamma(P)} |k_S(\gamma(P))|_g^p\,ds \,.
\end{equation}
Proposition \ref{PgammaP} contains an explicit formula of the $p$-rotation \eqref{p-rotation}. Notice that for $p=1$ we recover the standard geodesic rotation of $P$, that is the sum of the turning angles of $P$.

For a given rectifiable curve $c$ in $S$, we denote by $\mu_c(P)$ the {\em modulus} of a polygonal curve $P$ in $S$ inscribed in $c$, say $P\ll c$.
It is given by the maximum of the geodesic diameter of the pieces of arcs of $c$ determined by couples of consecutive vertices of $P$,  see \cite{alexandrov2012general}. 

In the same spirit as Lebesgue--Serrin's relaxed functional, we define the {\em $p$-curvature} functional $\mathcal{F}_p(c)$ of $c$ as
$$
\mathcal{F}_p(c):=\inf\Bigl\{\liminf_{h\to\infty}\mathbf{k}_p(P_h)\mid \{P_h\}\ll c\,,\,\,\mu_c(P_h)\to 0\Bigr\}\quad\mbox{ for }p\geq 1\,.
$$
Notice that in case $p=1$, the functional $\mathcal{F}_p(c)$ agrees with the total curvature $TC_S(c)$ of $c$ as defined in \cite{MS2, MS2b}.

We also recall that a rectifiable curve $c$ parameterized in arc-length belongs to the second order (intrinsic) Sobolev class $W^{2,p}([0,L],S)$, where $L=\mathcal L(c)$ is the length of $c$, provided that the unit tangent vector $\dot c:[0,L]\to TS$ is differentiable almost everywhere on $I$ and the weak covariant derivative
$$k_S(c):=\nabla_{\dot u} \dot u $$
belongs to $L^p([0,L],TS)$. In that case, it defines the curvature vector at $c(s)$ for almost
every $s\in[0,L]$, and the integral at the right-hand side of equation \eqref{eq:p-integral} is well-defined and finite.

We are now in position to present the main result of this paper.
\begin{thm}\label{MainThm}
     Let $(S,g)$ be a Riemannian surface as above, and let $c:[0,L]\to S$ be a rectifiable and open curve parameterized by arc length. Then $$\mathcal{F}_p(c)<\infty \mbox{ for some }p>1\Longleftrightarrow c\in W^{2,p}([0,L],S)$$ and in this case, there holds
$$
\mathcal{F}_p(c)=\int_c|k_S(c)|_{g}^p\,ds.
$$
\end{thm}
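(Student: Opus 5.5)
The proof splits into a lower bound, giving regularity and $\int_c|k_S(c)|_g^p\,ds\le\mathcal F_p(c)$, and an upper bound, giving $\mathcal F_p(c)\le\int_c|k_S(c)|_g^p\,ds$ whenever $c\in W^{2,p}$. I would follow the scheme of \cite{MSS25} for $\mathbb S^2$, using the explicit formula of Proposition \ref{PgammaP} for $\mathbf k_p(P)$. All constructions are localized in normalized isothermal charts $\varphi_q$ around the vertices. Since $S$ is compact and $C^2$, these charts have uniformly controlled distortion: $g=e^{2u}\delta$ with $u(q)=0$, $\nabla u(q)=0$ and $|\nabla u|\le C|x|$. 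Hence geodesic arcs of length $\ell$ in the chart are Euclidean segments up to curvature $O(\ell)$, and turning angles differ from Euclidean ones by $O(\ell^2)$.

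\emph{Lower bound.} Take $P_h\ll c$ with $\mu_c(P_h)\to0$ and $\sup_h\mathbf k_p(P_h)<\infty$. Parameterize the inscribed curves $\gamma_h:=\gamma(P_h)$ by arc length on $[0,L_h]$. Their lengths satisfy $L_h\to L$, because $\mathcal L(P_h)\to L$ and the smoothing arcs change length only by $O(\text{mesh}\cdot\text{angle})$. The curves $\gamma_h$ converge uniformly to $c$ after rescaling to $[0,L]$. The bound $\int|k_S(\gamma_h)|^p_g\le C$, with $p>1$, gives boundedness of $\gamma_h$ in $W^{2,p}$: one can embed $S$ isometrically in some $\mathbb R^N$, or argue chartwise, since the second fundamental form contributes a bounded normal part. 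By reflexivity, a subsequence converges weakly in $W^{2,p}$ and strongly in $C^1$. The limit must be the arc-length parameterization of $c$, so $c\in W^{2,p}$. The geodesic curvature $k_S(\gamma_h)=\nabla_{\dot\gamma_h}\dot\gamma_h$ converges weakly in $L^p$ to $k_S(c)$, because the Christoffel terms converge strongly. Lower semicontinuity of the $L^p$ norm under weak convergence then yields $\int_c|k_S(c)|^p_g\,ds\le\liminf_h\mathbf k_p(P_h)$. Taking the infimum over sequences gives the first implication and the inequality $\ge$.

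\emph{Upper bound.} For $c\in W^{2,p}$, I would use polygonals $P_h$ whose vertices are equally spaced in arc length, at step $\ell_h=L/h$. I would show that $\mathbf k_p(P_h)\to\int_c|k_S(c)|^p_g$. By Proposition \ref{PgammaP}, each vertex $q_i$ contributes roughly $\theta_i^p\,\ell_h^{1-p}$, up to factors $1+o(1)$, where $\theta_i$ is the geodesic turning angle at $q_i$. The key estimate compares $\theta_i$ with the curvature of $c$ near $q_i$: in the chart, $\theta_i$ equals the angle between averaged tangents, $\theta_i\approx|\ell_h^{-1}\int_{I_i}\dot c-\ell_h^{-1}\int_{I_{i-1}}\dot c|$ up to $O(\ell_h^2)$ metric errors. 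By Jensen's inequality this gives
\[
\theta_i^p\,\ell_h^{1-p}\le\int_{J_i}|k_S(c)|_g^p\,ds+\text{errors},
\]
where $J_i$ is the union of the two adjacent intervals, suitably halved and weighted. Summing over $i$ gives $\limsup_h\mathbf k_p(P_h)\le\int_c|k_S(c)|^p_g$; combined with the lower bound, this gives equality. The error terms sum to $O(h\,\ell_h^{2p}\ell_h^{1-p})\to0$.

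\emph{Main obstacle.} I expect the hardest step to be the precise use of Proposition \ref{PgammaP}, namely showing that the chart-constructed arcs $\Gamma$ have $p$-energy asymptotically equal to $\theta^p\ell^{1-p}$, with relative error $o(1)$ uniform in the vertex. A merely comparable bound is not enough: an estimate up to a multiplicative constant would suffice for finiteness but not for the equality of energies. I would attack this by comparing $\Gamma$ with the constant-curvature Euclidean arc joining the midpoints. That arc has energy exactly $\theta^p(\ell/2)^{1-p}\cdot c_\theta$ with $c_\theta\to1$ as $\theta\to0$. The metric factor $e^{u}=1+O(\ell^2)$ perturbs the energy only by a factor $1+O(\ell)$. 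Large angles need separate treatment, but for $W^{2,p}$ curves they cannot occur for fine meshes, since $\theta_i\le C\ell_h^{1-1/p}\|k\|_{L^p}\to0$.
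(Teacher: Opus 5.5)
Your proposal follows the paper's proof essentially step by step. Compactness of the inscribed curves $\gamma(P_h)$ in $W^{2,p}$ plus weak lower semicontinuity gives regularity and $\int_c|k_S(c)|_g^p\,ds\le\mathcal F_p(c)$. For the reverse inequality, the Bruckstein-type double Jensen inequality is matched against the per-vertex energy $\ell^{1-p}\theta_i^p(1+o(1))$ of Proposition \ref{PgammaP}; taking vertices equally spaced in arc length instead of equilateral polygonals only removes the need for the paper's reparametrization $\psi$.

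Two corrections to your last paragraphs:
\begin{enumerate}
\item The Euclidean arc tangent to the edges at the midpoints has radius $\frac{\ell}{2}\cot\frac{\theta}{2}$ and energy $\ell^{1-p}\theta^p(1+O(\theta^2))$. So $c_\theta\to 2^{1-p}$, not $1$; with $c_\theta\to1$ you would only get $\mathcal F_p(c)\le 2^{p-1}\int_c|k_S(c)|_g^p\,ds$.
\item The conformal change enters \eqref{GCconformal} through the additive term $\partial_{\hat{\gu}}\lambda=O(\ell)$, not as a factor $1+O(\ell)$. As in the paper, you must combine Proposition \ref{stima_p} with the a priori bound $\theta_i\le C\ell^{1-1/p}$, and the resulting errors sum to $O(\ell^{1/p})$ rather than $O(\ell^{p})$.
\end{enumerate}
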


The proof of Theorem \ref{MainThm} follows a similar strategy to the one we used in \cite{MSS25}. The main new contribution is given by
Theorem \ref{Teo1}, where we obtain suitable estimates concerning the
$p$-rotation $\mathbf{k}_p(P)$ of polygonals inscribed in the curve $c$ and with small mesh.\vspace{0.3cm}

{\bf Acknoweledgment.}
The authors were partially supported by the GNAMPA of INDAM.
%%%%%%%%
\section{Notation and preliminary results}
Throughout this paper, we let $(S,g)$ be a Riemannian surface of class $C^2$, that is assumed to be compact, orientable, and without boundary.

We consider curves $c:I\to S$ defined in a nondegenerate interval $I\subset \mathbb{R}$.
A curve $c$ is called rectifiable if its length ${\mathcal L}(c)$ is finite. In that case, its arc-length parameterization is Lipschitz continuous.

The {\em Fr\'echet distance} $d_{F}(c_1,c_2)$ between two rectifiable curves $c_1,c_2:I\to S$ is the infimum, over all strictly monotonic
reparametrizations, of the maximum pointwise geodesic distance $d_S$ between them.
More precisely, if $\beta_1,\beta_2$ are reparametrizations of the interval $I$, we have $$d_F(c_1,c_2)\coloneqq\inf_{\beta_1,\beta_2}\max_{t\in I} d_{S}\Big(c_1(\beta_1(t)),c_2(\beta_2(t))\Big)\,. $$
Moreover, if $\{c_h\} $ is a sequence of rectifiable curves in $S$ such that $d_{F}(c_h,c)\to 0$ as $h\to \infty$ for some rectifiable curve $c$, then by lower semicontinuity
\begin{equation}
    \mathcal{L}(c)\leq\liminf_{h\to\infty}\mathcal{L}(c_h)\,.
\end{equation}
\subsection{Sobolev functional spaces}
We fix an isometric embedding ${\bf j}:S\hookrightarrow \R^m .$
For a given nondegenerate interval $I\subset \mathbb{R}$, and a continuous function $u:I\to S$, we let $\gu={\bf j}\circ u$, and we consider for $k=1,2$ and $p\geq 1$ the Sobolev class
$$
W^{k,p}(I,{\bf j}(S))\coloneqq\left\{\gu\in W^{k,p}(I,\mathbb{R}^m): \gu(t)\in {\bf j}(S) \text{ for a.e. } t\in I\right\}.
$$

Since we deal with intrinsic results, we notice that if $\gu\in W^{2,p}(I,{\bf j}(S))$, the weak covariant derivative $\nabla_{\dot u} \dot u$ is well defined by the unique vector field
$$
\nabla_{\dot u} \dot u\in L^p(I,TS), \qquad \nabla_{\dot u} \dot u(t)\in T_{u(t)}S \ \text{for a.e. } t\in I,
$$
where $TS$ is the tangent bundle, such that
$$
\int_I g(\dot u,\nabla_{\dot u}\Phi)\,dt
=
-\int_I g(\nabla_{\dot u} \dot u,\Phi)\,dt
$$
for every smooth vector field $\Phi$ along $u$ with compact support in $I$.
We then may define
$$
W^{2,p}(I,S)\coloneqq\left\{u:I\to S \mid \gu\in W^{1,p}(I,S), \nabla_{\dot u} \dot u\in L^p(I,TS)\right\}.
$$

If, in addition, $|\dot u|_g=1$ a.e. on $I$, the weak geodesic curvature vector is defined by
$$
k_S(u):=\nabla_{\dot u} \dot u \,,
$$
and in that case one has $|k_S(u)|_g\in L^p(I).$
We correspondingly have $\Vert \dot \gu \Vert=1$ and
$$ |k_S(u)(t)|_g=\Vert \ddot\gu(t)^\top \Vert $$
for a.e. $t\in I$, where $\ddot\gu(t)^\top$ is the orthogonal projection of $\ddot\gu(t)\in\R^m$ onto the tangent plane to ${\bf j}(S)$ at $\gu(t)$. Notice moreover that the normal component $\ddot\gu(t)^\perp:= \ddot\gu(t)-\ddot\gu(t)^\top$ is uniformly bounded by a constant only depending on  the second fundamental form of the embedding $\mathbf{j}$.

In conclusion, if a rectifiable curve $c:I\to S$ is parameterized in arc-length, we infer that
$c\in W^{2,p}(I,S)$ if and only if $|k_S(c)|_g\in L^p(I).$

\subsection{Local isothermal coordinates}
The following result specializes well-known properties of Riemannian surfaces as above to our purposes.
\begin{thm}\label{isothermal _coordinates}
For every $\epsilon>0$, there exist two positive real numbers $\delta_1=\delta_1(\epsilon,S)>0$ and $C_0>0$ such that for every $p\in S$ we can find a neighborhood $U_p$ of $p$ in $S$ of positive diameter $\delta$ lower than $\delta_1$, and a conformal map $\phi_p: U_p\to \R^2  $, that is
$$
g_q=e^{2\lambda_p(\phi_p(q))}\mathrm{Id}\quad\forall\,q\in U_p\,, $$
such that $\phi_p(p)=(0,0)$ and the function $\lambda_p:\phi_p(U_p)\to\R$ satisfies:
    \begin{equation}
\label{normalizzazione}
        \lambda_p(0,0)=0\,,\quad \nabla\lambda_p(0,0)=(0,0)
    \end{equation}
\begin{equation}
\label{stimeIsoterme}
|e^{\lambda_p(\phi_p(q))}-1|\leq \epsilon \,,
         \quad \|\nabla\lambda_p ( \phi_p(q))\|\leq C_0\delta \quad\forall\, q\in U_p\,,
    \end{equation}
and finally the second order mixed derivatives satisfy
    \begin{equation}\label{missta}
\partial^2_{xy}\lambda_p(0,0)=\partial^2_{yx}\lambda_p(0,0)=0.
\end{equation} 

\end{thm}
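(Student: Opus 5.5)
Starting from any isothermal chart, I will adjust it in three steps: precompose with a similarity to get \eqref{normalizzazione}, precompose with a holomorphic polynomial map to get \eqref{missta}, and then use compactness of $S$ to get constants that do not depend on $p$.

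\emph{Local existence and first-order normalization.} Since $S$ is a $C^2$ surface, around each $p\in S$ there is an isothermal chart $\psi:V\to\R^2$ with $g=e^{2\mu}(dx^2+dy^2)$. For $C^2$ metrics this is standard, e.g.\ by solving the Beltrami equation, which holds for Hölder continuous metric coefficients. Composing with a translation gives $\psi(p)=0$. Composing with the dilation $z\mapsto e^{\mu(0)}z$ gives $\lambda(0)=0$.

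To kill $\nabla\lambda(0)$ I use the fact that under a holomorphic change $w=f(z)$ the conformal factor transforms by $\lambda_{new}(w)=\lambda(z)-\log|f'(z)|$. I take $f(z)=z+az^2+bz^3$ with $f^{-1}$ defined near $0$. Since $\log|f'(z)|=\mathrm{Re}\log(1+2az+3bz^2)$, its gradient at $0$ is determined by the complex number $2a$. Choosing $2a=\partial_x\lambda(0)-i\,\partial_y\lambda(0)$ makes $\nabla\lambda_{new}(0)=0$.

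\emph{Killing the mixed derivative.} The Hessian of $\mathrm{Re}\,h$ for a holomorphic $h$ is trace-free. Its entries satisfy $\partial_{xx}=-\partial_{yy}=\mathrm{Re}\,h''$ and $\partial_{xy}=-\mathrm{Im}\,h''$. Hence the second-order part of $\log f'$, which involves $b$ and $a^2$, can be chosen to cancel $\partial^2_{xy}\lambda(0)$. Concretely, I choose $b$ so that $\mathrm{Im}$ of the $z^2$-coefficient of $\log(1+2az+3bz^2)$ equals $-\tfrac12\partial^2_{xy}\lambda(0)$. Only the trace-free part of the Hessian can be changed this way. The trace, $\Delta\lambda(0)=-K(p)$, is intrinsic, but \eqref{missta} only asks for the off-diagonal entry, so this is enough.

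This step needs a little care about regularity. If $g\in C^2$, the isothermal factor is only $C^{1,\alpha}$ in general, so $\partial^2_{xy}\lambda(0)$ may not exist. I will either assume, as the paper implicitly does, that $\lambda_p$ is $C^2$, or rotate coordinates so that the Hessian at $0$ is diagonal. A rotation $z\mapsto e^{i\theta}z$ preserves the conditions $\lambda(0)=0$ and $\nabla\lambda(0)=0$. It also diagonalizes the symmetric Hessian, which gives \eqref{missta} more simply, and this is what I would actually use.

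\emph{Uniform estimates.} The remaining task is to make $\delta_1$ and $C_0$ uniform in $p$, and this is where I expect the main obstacle. By compactness of $S$ and continuous dependence of the normalized charts on the base point, the $C^2$ norms of $\lambda_p$ on a disc $B_r(0)$ are bounded by a single $M$, with $r$ independent of $p$. One way to get this is to build charts from finitely many reference isothermal charts covering $S$ with a Lebesgue number and compose them with the explicit normalizations above, whose coefficients are bounded. Then, taking $U_p=\phi_p^{-1}(B_\rho)$ with $\rho\le r$, the mean value theorem gives:
\begin{itemize}
\item $\|\nabla\lambda_p\|\le M\rho$, since $\nabla\lambda_p(0)=0$;
\item $|\lambda_p|\le M\rho^2$, so $|e^{\lambda_p}-1|\le\epsilon$ for $\rho$ small depending on $\epsilon$.
\end{itemize}
Since $e^{\lambda_p}$ is close to $1$, the diameter $\delta$ of $U_p$ is comparable to $\rho$, so $\|\nabla\lambda_p\|\le C_0\delta$ with $C_0\approx 2M$. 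Choosing $\delta_1(\epsilon,S)$ accordingly concludes the proof.
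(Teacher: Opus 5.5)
Your proposal is correct and follows essentially the same route as the paper. Both arguments rescale the chart and apply a holomorphic change $z\mapsto z+az^2$ to get \eqref{normalizzazione}, use a rotation that diagonalizes the Hessian at $0$ to get \eqref{missta}, and use compactness to bound $\lambda_p$ and $D^2\lambda_p$ uniformly in $p$, from which \eqref{stimeIsoterme} follows; your caveat about regularity is well taken, since the paper also tacitly assumes more than $C^2$ smoothness of $S$ so that $\lambda_p\in C^2$ with a uniform Hessian bound.
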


\begin{proof}
The existence of isothermal coordinates for a Riemannian manifold is a well known fact, see e.g. \cite{Ch, DK}.

The normalization conditions \eqref{normalizzazione} are obtained by using that the system yielding to the existence of local isothermal coordinates is overdetermined. More specifically, it is sufficient to post-compose with a conformal coordinate change. 
First scale the coordinates 
so that
$\lambda_p(0,0)=0$ and then use a local holomorphic change of the form $w=f(z)=z+az^2$. Choosing $a$ appropriately, one gets $\nabla \lambda_p(0,0)=(0,0)$. 

Furthermore, property \eqref{missta} follows by applying the spectral theorem for linear maps to the Hessian of $\lambda_p$ at $(0,0)$ and observing that conjugating by a rotation preserves the first order conditions of $\lambda_{p}$ at $(0,0)$.

The first inequality in \eqref{stimeIsoterme} follows since the function $\lambda_p$ is equi-uniformly continuous with respect to the base point $p\in S$, as a consequence of the  smoothness and compactness of $S$.

Furthermore, we can bound the second order expansion of the conformal factor $\lambda_p$. Indeed, we have
$$
\lambda_p(x,y)=\underbrace{\lambda_p(0,0)}_{=0}+\underbrace{\nabla\lambda_p(0,0)}_{=(0,0)}(x,y)+\frac 12\,(x,y) D^2\lambda_p(0,0)(x,y)^\top+o(|(x,y)|^2),
$$
which implies
$$
\nabla\lambda_p(x,y)= (x,y)\,D^2\lambda_p(0,0)+o(|(x,y)|).
$$
We observe that
\begin{equation}\label{Heis}
        |D^2\lambda_p(\phi_p(q))| \leq C_0,\quad \forall q\in U_p.
    \end{equation}
Indeed, the estimate follows from a standard compactness argument, observing that the derivatives of $\lambda_p$ are functions of Christoffel symbols and their derivatives, see \eqref{simbolielambda}.
We thus have the second estimate in \eqref{stimeIsoterme} and the proof is complete.
\end{proof}
\begin{oss} In the case when $S$ is the unit sphere $\mathbb S^2$ in $\mathbb R^3$, in \cite{MSS25} we chose $\phi_p$ equal to a rescaled stereographic projection from the antipodal point to $p$, so that
$$
\lambda_p(x,y)=\log\frac 4{4+x^2+y^2}\quad\forall\,p\in\mathbb S^2 .
$$
With this choice, the properties \eqref{normalizzazione}, \eqref{stimeIsoterme} and \eqref{missta}
are readily checked.
\end{oss}
\subsection{Geodesic curvature via conformal maps}
\par We shall also make use of the following estimate, the proof of which is omitted.
 \begin{prop}[\cite{MSS25}]\label{stima_p}
    For every $p\geq 1$ there exists a real constant $C=C(p)>0$ such that for every $a,b\geq 0$
 \begin{equation}\label{dis-alto}
   a^p-Ca^{p-1}b\leq |a+b|^p \leq a^p+Ca^{p-1}b+Cb^p
\end{equation}
\end{prop}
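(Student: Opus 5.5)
The statement to prove is Proposition \ref{stima_p}: for every $p\ge 1$ there is $C=C(p)$ with $a^p-Ca^{p-1}b\le (a+b)^p\le a^p+Ca^{p-1}b+Cb^p$ for all $a,b\ge 0$. Since $a,b\ge 0$, the left inequality is immediate: $(a+b)^p\ge a^p\ge a^p-Ca^{p-1}b$ for any $C\ge 0$. So only the upper bound needs an argument.

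I would first handle the trivial case $a=0$, where the claim reads $b^p\le Cb^p$ and holds for any $C\ge 1$. For $a>0$, both sides are homogeneous of degree $p$ in $(a,b)$. Dividing by $a^p$ and setting $t=b/a\ge 0$, the claim becomes
\[
(1+t)^p\le 1+Ct+Ct^p\qquad\forall\, t\ge 0 .
\]

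To prove this I split into the ranges $t\in[0,1]$ and $t\ge 1$.

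On $[0,1]$, the function $f(t)=(1+t)^p$ is convex and $C^1$, and $f'(t)=p(1+t)^{p-1}\le p\,2^{p-1}$. The mean value theorem therefore gives $(1+t)^p\le 1+p\,2^{p-1}t$.

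On $[1,\infty)$ I use $1+t\le 2t$, which gives $(1+t)^p\le 2^p t^p\le 1+2^p t^p$.

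Hence the choice $C=\max\{1,\,p\,2^{p-1},\,2^p\}=2^p$ works, and in fact $C=p\,2^{p-1}+2^p$ is a safe explicit choice.

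No step is a genuine obstacle. The only point needing care is that the derivative bound on $[0,1]$ uses $p\ge 1$, so that $(1+t)^{p-1}$ is nondecreasing and bounded by $2^{p-1}$ there.
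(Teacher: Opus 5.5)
The paper gives no proof of this proposition: it is quoted from \cite{MSS25} with the proof omitted, so there is no argument in the paper to compare with. Your proof is correct and complete for the statement as written. It consists of the trivial lower bound for $b\ge 0$, the reduction by homogeneity to $(1+t)^p\le 1+Ct+Ct^p$, and the split at $t=1$.

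There is one arithmetic slip. The identity $\max\{1,p2^{p-1},2^p\}=2^p$ holds only for $p\le 2$; for $p>2$ one has $p2^{p-1}>2^p$ (e.g.\ $12>8$ at $p=3$). So your mean value bound does not by itself justify $C=2^p$. This is harmless, because only the existence of some $C(p)$ is claimed, and both $\max\{1,p2^{p-1},2^p\}$ and your $p2^{p-1}+2^p$ are valid constants. If you want $C=2^p$ for all $p\ge 1$, use the convexity you already mention through the chord rather than the endpoint derivative: on $[0,1]$ one has $(1+t)^p\le 1+(2^p-1)t$.

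A side remark: with $b\ge 0$ the left inequality is vacuous, as you note. The paper invokes it in Section~\ref{Sec:anglebound} with $b$ a directional derivative of $\lambda_{p_i}$, which has no sign. What is really needed there is $|a+b|^p\ge a^p-p\,a^{p-1}|b|$ for $a\ge 0$, $b\in\R$. When $|b|\le a$, this follows from the tangent-line inequality $(a-|b|)^p\ge a^p-p\,a^{p-1}|b|$ for the convex function $x\mapsto x^p$. When $|b|>a$ it is trivial, since the right-hand side is then at most $(1-p)a^p\le 0$.
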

The following lemma is proved in \cite{MSS25}.

\begin{lem}\label{LemmaComparison}
    Let $f:(M,g)\rightarrow (N,g')$ be a conformal map between $2$--dimensional oriented Riemannian manifolds such that $g=e^{2\lambda }g'$.
Let $c:[0,L]\rightarrow M$ be a curve of class $C^2$, and suppose that the image curve $\gamma\coloneqq f\circ c:[0,L]\rightarrow N$ is parameterized by arc length and let $\hat\gu\in T_{\gamma}N$ be the conormal to $\gamma$.
Then, if $k_M$ and $k_{N}$ are the geodesic curvature of $c$ and $\gamma$, respectively, for every $t\in (0,L)$ there holds \begin{equation}\label{GCconformal}
        k_{M}(t)=e^{-\lambda(\gamma(t))}\Big(k_{N}(t)+\partial_{\hat\gu}\lambda(\gamma(t))\Big).
    \end{equation}
\end{lem}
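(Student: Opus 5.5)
The plan is a direct computation with the Koszul formula for a conformal change of metric. The one delicate point is the sign in \eqref{GCconformal}, which depends on how the conormal $\hat\gu$ is oriented relative to the signed geodesic curvature.

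\emph{Reduction to one surface.} The relation $g=e^{2\lambda}g'$ is read, as in the statement, as $g=e^{2\lambda\circ f}f^*g'$, with $\lambda$ a function on $N$. Then $f$ is an isometry from $(M,g)$ onto its image endowed with $\hat g:=e^{2\lambda}g'$. We take $f$ orientation preserving, so that it also intertwines the rotations by $+\pi/2$, denoted $\hat J$ on $(M,g)$ and $J$ on $N$. Geodesic curvature is invariant under orientation-preserving isometries. Hence $k_M(t)$ equals the geodesic curvature of $\gamma$ computed in $(N,\hat g)$, and it suffices to work on $N$ with the two conformal metrics $g'$ and $\hat g$.

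\emph{Sign conventions.} Let $T=\dot\gamma$, which is $g'$-unit. We use the signed curvatures
\[
k_N=g'(\nabla'_TT,JT), \qquad k_M=\hat g\bigl(\hat\nabla_{\hat T}\hat T,\hat J\hat T\bigr),
\]
where $\hat T=e^{-\lambda}T$ is the $\hat g$-unit tangent. Note that $\hat J\hat T=e^{-\lambda}JT$, because $J$ is conformally invariant. The conormal is taken to be $\hat\gu=-JT$, the unit normal pointing away from the side toward which positive curvature bends the curve. This is the convention under which \eqref{GCconformal} holds with the plus sign.

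As a consistency check, take $N=\R^2$ with $\lambda=\lambda(|x|)$ and let $\gamma$ be the counterclockwise circle of radius $r$. Then $L_{\hat g}=2\pi re^{\lambda}$ and $k_M=\frac{dL_{\hat g}/d\hat s}{L_{\hat g}}=e^{-\lambda}(1/r+\lambda'(r))$. Since $\hat\gu=-JT$ is the outward normal, this equals $e^{-\lambda}(k_N+\partial_{\hat\gu}\lambda)$, so the sign is correct.

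\emph{Computation.} The Koszul formula gives
\[
\hat\nabla_XY=\nabla'_XY+X(\lambda)Y+Y(\lambda)X-g'(X,Y)\nabla'\lambda .
\]
Since $\gamma$ is $g'$-arc length, $d\hat s=e^{\lambda}ds$. We then compute
\[
\hat\nabla_{\hat T}\hat T=e^{-\lambda}\hat\nabla_T(e^{-\lambda}T)=e^{-2\lambda}\bigl(-T(\lambda)T+\nabla'_TT+2T(\lambda)T-\nabla'\lambda\bigr).
\]
Pairing with $\hat J\hat T=e^{-\lambda}JT$ in $\hat g=e^{2\lambda}g'$ kills the terms tangent to $\gamma$, and leaves
\[
k_M=e^{2\lambda}e^{-\lambda}e^{-2\lambda}\bigl(g'(\nabla'_TT,JT)-g'(\nabla'\lambda,JT)\bigr)=e^{-\lambda}\bigl(k_N-\partial_{JT}\lambda\bigr).
\]
Since $\hat\gu=-JT$, this is $e^{-\lambda}\bigl(k_N+\partial_{\hat\gu}\lambda\bigr)$ evaluated at $\gamma(t)$, which is \eqref{GCconformal}.

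\emph{Main obstacle.} The only real difficulty is bookkeeping the sign. One must keep $g$ (on $M$) as the rescaled metric, as in the statement, rather than inverting the conformal factor. One must also fix the conormal as $-JT$ relative to the signed curvature; the circle example above pins this convention down unambiguously. The remaining steps are routine tensor calculus, valid for $C^2$ curves and $C^1$ factors $\lambda$.
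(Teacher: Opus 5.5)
Your proof is correct. The paper does not prove this lemma itself; it quotes it from \cite{MSS25}, so there is no in-text argument to match. The only related computation in the paper is the list \eqref{simbolielambda} of Christoffel symbols of $e^{2\lambda}\delta_{ij}$. In the flat case $N=\R^2$ used there, your transformation law $\hat\nabla_XY=\nabla'_XY+X(\lambda)Y+Y(\lambda)X-g'(X,Y)\nabla'\lambda$ is exactly the invariant form of those symbols. A chart computation of the geodesic curvature of $c$ in isothermal coordinates would therefore be the same calculation as yours. Working invariantly spares you the coordinates and shows the identity holds for any target $(N,g')$, not only $\R^2$.

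The main added value of your write-up is that it fixes the conventions the statement leaves implicit. The curvatures are signed, and the conormal is $\hat\gu=-JT$, i.e.\ the outward conormal of a region lying to the left of $\gamma$. This is the orientation that matches the paper's spherical model $\lambda=\log\frac{4}{4+|x|^2}$. For the counterclockwise circle $|x|=r$ it gives $e^{-\lambda}\bigl(\frac1r-\frac{2r}{4+r^2}\bigr)=\frac{4-r^2}{4r}=\cot\phi$ with $r=2\tan(\phi/2)$, and in particular $0$ on the equator $r=2$. The opposite conormal would instead give $\frac{4+3r^2}{4r}$.

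Two small remarks:
\begin{itemize}
\item In your circle check, $\hat s$ must denote $\hat g$-arc length in the radial direction ($d\hat s=e^{\lambda}dr$), not arc length along $\gamma$ as in your subsequent computation.
\item The orientation-preserving assumption on $f$ only affects the overall sign. If $f$ reverses orientation, the identity holds up to a factor $-1$, which is irrelevant for \eqref{intcurv}, where only absolute values enter.
\end{itemize}
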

In Section \ref{Sec-gamma(P)}, we will use the Lemma above using 
{$M=S$, $N=\R^2$} and replacing $f$ with $\phi_p$ for some $p\in S$. 
Then we have 
%curve $\gamma $ will be the preimage of a curve $c$ in $S$, namely 
$\gamma=\phi_p(c)$.

In particular, what we will need in Section \ref{Sec-gamma(P)} is an integral consequence of {formula} \eqref{GCconformal}. Indeed, if $c\in W^{2,p}$, passing through the arc-length parametrization of $\gamma$, we have 
\begin{equation}
\label{intcurv}
     \int_0^L\vert k_{S}(c(s)) \vert_{g}^p\, ds=\int_0^{\mathcal{L}(\gamma)}e^{(1-p)\lambda_p(\gamma(s))}\left\vert k_{\R^2}(\gamma(s)) +\partial_{\hat\gu}\lambda_p(\gamma(s)) \right\vert^p
\, ds \,.
    \end{equation}

\subsection{Geodesic expansion} Using the geodesic equation, we compute the Taylor expansion of the image in $\R^2$ by the map $\phi_p$ from Theorem \ref{isothermal _coordinates} of a small geodesic arc in $S$ starting at $p$.

We consider a geodesic curve $\gamma:[0,\delta]\to S$ parameterized by arclength with
$$
\gamma(0)=p\in S,\qquad  v=\dot\gamma(0),\qquad%\norm{v}_{T_pS}\coloneqq 
|v|_g=1.
$$
We call $\gamma_p\coloneqq \phi_p\circ \gamma$, where $\phi_p$ is the map of Theorem \ref{isothermal _coordinates}. Therefore, we assume $\delta\leq \delta_1$.
Since $\gamma_p(t)=(x(t),y(t))$ is an ordinary curve in $\mathbb{R}^2$, it admits the
Taylor expansion:
\begin{equation}
\label{Taylor}
\begin{cases}
    x(t) = x(0) + t\,\dot{x}(0) + \frac{t^2}{2}\ddot{x}(0)+o(t^2) \\
y(t) = y(0) + t\,\dot{y}(0) + \frac{t^2}{2}\ddot{y}(0)
      + o(t^2).
\end{cases}
\end{equation}

This expansion is purely Euclidean; the geometry of $S$ enters through
the relations satisfied by the derivatives, since $\gamma$ is a geodesic. Indeed, in local charts, the geodesic equation
$$%\begin{equation}
\ddot{x}^k + \Gamma^k_{ij}\dot{x}^i\dot{x}^j = 0
$$%\end{equation}
gives a geometric meaning to the second derivatives of the curve, i.e., $$\ddot{x}^1 =-\Gamma^1_{ij}\dot{x}^i\dot{x}^j,\qquad\ddot{x}^2 =-\Gamma^2_{ij}\dot{x}^i\dot{x}^j.$$
For the conformal metric $$g_{ij} = e^{2\lambda} \delta_{ij},
\qquad
g^{ij} = e^{-2\lambda} \delta^{ij},$$ the Christoffel
symbols are
$$
\Gamma^k_{ij}
=
\frac{1}{2} g^{k\ell}
\left(
\partial_i g_{j\ell}
+
\partial_j g_{i\ell}
-
\partial_\ell g_{ij}
\right),
$$
$$\partial_i g_{j\ell}
=
2 e^{2\lambda} (\partial_i \lambda_p)\, \delta_{j\ell},$$
$$
\Gamma^k_{ij}
=
\delta^k_j \partial_i\lambda_p
+
\delta^k_i \partial_j\lambda_p
-
\delta_{ij} \partial^k\lambda_p.
$$
Writing $\lambda_x = \partial_x\lambda_p$ and $\lambda_y = \partial_y\lambda_p$, we have
%\begin{equation}\label{simboli_conformi}
\begin{equation}\label{simbolielambda}
\begin{aligned}
\Gamma^1_{11} &= \lambda_x,
&
\Gamma^1_{12} = \Gamma^1_{21} &= \lambda_y,
&
\Gamma^1_{22} &= -\lambda_x,
\\
\Gamma^2_{22} &= \lambda_y,
&
\Gamma^2_{12} = \Gamma^2_{21} &= \lambda_x,
&
\Gamma^2_{11} &= -\lambda_y,
\end{aligned}   
\end{equation}
%\end{equation}
so that the geodesic equation becomes
$$\begin{cases}
\ddot{x}(t)
=
-\big(
\lambda_x(\gamma(t))\,(\dot{x}(t)^2-\dot{y}(t)^2)
+ 2\lambda_y(\gamma(t)) \,\dot{x}(t)\dot{y}(t)
\big) \\
\ddot{y}(t)
=
-\big(
\lambda_y(\gamma(t))\,(\dot{y}(t)^2-\dot{x}(t)^2)
+ 2\lambda_x(\gamma(t))\, \dot{x}(t)\dot{y}(t)
\big).
\end{cases}
$$
%Since $(d\phi_p)_p\dot\gamma(0)=(d\phi_p)_p v=(\dot{x}(0),\dot{y}(0))$, 
Evaluating the geodesic equation at $t=0$, so that $\gamma(0)=(x(0),y(0))=(0,0)$,
and substituting into the Euclidean Taylor expansion yields
$$\begin{cases}
    x(t)
=t \dot x(0)
-\frac{t^2}{2}
\big(
\lambda_x(0,0)\,(\dot x(0)^2-\dot y(0)^2)
+2\lambda_y(0,0)\,\dot x(0) \dot y(0)
\big)
+ o(t^2) \\
y(t)
=t \dot y(0)
-\frac{t^2}{2}
\big(
\lambda_y(0,0)\,(\dot y(0)^2-\dot x(0)^2)
+2\lambda_x(0,0)\,\dot x(0)\dot y(0)
\big)
+ o(t^2).
\end{cases}$$ Then we have
$$
    \gamma_p(t)= t\,(d\phi_p)_p v-\dfrac{t^2}{2}A_p+o(t^2),
$$
where $$A_p=\left(\begin{array}{c}
\lambda_x(0,0)(\dot x(0)^2-\dot y(0)^2)
+2\lambda_y(0,0)\dot x(0) \dot y(0) \\
%\,,\,
\lambda_y(0,0)(\dot y(0)^2-\dot x(0)^2)
+2\lambda_x(0,0)\dot x(0)\dot y(0)
\end{array}\right).
$$

In conclusion, since by the second equation in \eqref{normalizzazione} the vector $A_p$ is zero, we obtain:
% since the gradient of $\lambda$ is zero at $(0,0)$ and 
\begin{equation}\label{expansion}
    \gamma_p(t)= t\,(d\phi_p)_p v+o(t^2).
\end{equation}
%
%%%%%
%
\section{Piecewise smooth curves inscribed in polygonals}\label{Sec-gamma(P)}
We wish to construct a suitable piecewise smooth curve $\gamma(P)$ linked to a polygonal $P$ in such a way that the definition \eqref{p-rotation} of $p$-rotation makes sense.

To this purpose, in this section we consider two consecutive geodesic arcs of the polygonal with junction point in $p\in S$ and turning angle $\theta$, and
%. To obtain a suitable estimate of the $p$--rotation in \eqref{p-rotation},
% Theorem \ref{Teo1}, 
we look the edges and the angles of the triangle in charts where we want to inscribe a piece of the curve $\gamma(P)$.
\begin{figure}[H]
    \centering
    \begin{tikzpicture}[scale=0.83]
    %\draw[step=1cm,color=gray] (0,0) grid (15,6);%Uncomment this to get some helpful grid lines
    \node[anchor=south west,inner sep=0] at (0,0){\includegraphics[width=0.415\textwidth]{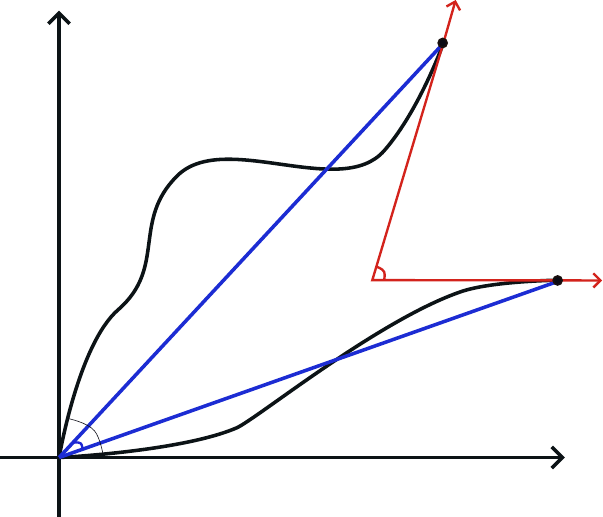}};
       \node at (2.7,4.9) {$\gamma_1(t)$};
       \node at (3.7,1.2) {$\gamma_2(t)$};
    \node at (1.3,1.1) {$\alpha$};
    \node at (5.2,3.2) {$\textcolor{red}{\alpha(\delta)}$};
     \node at (4.6,4) {$\textcolor{red}{\bar\ell_1}$};
     \node at (4.7,5.5) {$\textcolor{blue}{\tilde\ell_1}$};
     \node at (4.9,1.8) {$\textcolor{blue}{\tilde\ell_2}$};
     \node at (5.9,3.3) {$\textcolor{red}{\bar\ell_2}$};
     \node at (1.7,1.4) {$\textcolor{blue}{\tilde\alpha}$};
 \draw[black, thick] (5.5,5.9) to (6.9,2.9);
 \node at (7,4.3) { $\gamma_2-\gamma_1$ };

  % \node at (5.45,5.85) {$A$ };
   %\node at (4.6,3) {$B$ };
    %  \node at (6.9,2.9) {$C$ };
\coordinate (A) at (5.5,5.85);
\coordinate (B) at (4.6,3);
\coordinate (C) at (6.9,2.9);
\draw pic[draw=black,-,"\textcolor{orange}{$\beta_1$}",angle eccentricity=2,angle radius=0.3cm]
{angle = B--A--C};
\draw pic[draw=black,-,"\textcolor{orange}{$\beta_2$}",angle eccentricity=2,angle radius=0.3cm]
{angle = A--C--B};
    \end{tikzpicture}
    \caption{Picture in $\R^2$ after the conformal map $\phi_p$.}
    
    \label{figura}
    \end{figure}
    \subsection{Turning angles and edges}
We consider the piece of the polygonal $P$ given by two consecutive geodesic arcs $\gamma_1,\gamma_2$  of small length $\delta$, parameterized by arc length, with initial velocity $v_1,v_2\in T_pS$ and turning angle $\theta $ at $p$, and we call $\alpha=\pi-\theta$ the angle between $v_1,v_2\in T_pS$.

We consider the image curves through the map $\phi_p$,
$$ \gamma_{i,p}=\phi_p\circ\gamma_i\,,\quad i=1,2\,,
$$
and we analyze in three steps its main properties.
    \begin{oss}
       Figure \ref{figura} shows the blue and red triangles of interest obtained after projecting two consecutive geodesic arcs onto $\R^2$.

In particular, when the arcs belong to a polygonal curve $P$ inscribed in a 
sufficiently smooth curve $c$, for $\mu_c(P)$ small we have that the angle $\alpha=\pi-\theta$ is almost equal to $\pi$.
    \end{oss}
%
%\smallskip
\par\noindent
{\em Step 1.}
We first consider the function $t\mapsto \alpha(t)$ that gives the angle between the tangent vectors $\dot\gamma_{i,p}$ at time $t$, i.e.
$$
\cos\alpha(t)=\dfrac{\langle\dot \gamma_{1,p}(t),\dot\gamma_{2,p}(t)\rangle_{\R^2}}{\|\dot \gamma_{1,p}(t)\|\|\dot \gamma_{2,p}(t)\|}.
$$
By conformality, using equation \eqref{expansion} we have
\begin{equation}
\label{expansion-i}
\dot\gamma_{i,p}(t)=(d\phi_p)_pv_i+o(t),\quad i=1,2
\end{equation}
so that we obtain
$$
\cos\alpha(t)=\cos\alpha+o(t).
$$
In order to pass from this identity to an estimate on the angle $\alpha(t)$,
we write
$$\alpha(t)=\arccos(\cos\alpha(t)).
$$
The function $\arccos$ is smooth in a neighborhood of $\cos\alpha$ provided
$\sin\alpha\neq 0$, that is, as long as the initial angle is non-degenerate. Assuming $\alpha\in(0,\pi)$, a Taylor expansion of $\arccos$ around
$\cos\alpha$ gives
$$
\arccos(x)=\alpha-\frac{1}{\sin\alpha}\,(x-\cos\alpha)+O\big((x-\cos\alpha)^2\big).
$$
Substituting $x=\cos\alpha(t)$ yields
$$
\alpha(t)=\alpha+o(t),
$$
and hence we correspondingly have
%the corresponding turning the curve of turning angles is 
$$
\theta(t):=\pi-\alpha(t)=\theta+o(t).
$$

The previous equations imply that even if the two curves $\gamma_{i,p}$ fail to be geodesics in $ \R^2$, and the angle function $\alpha(t)$ is not constant, its variation can be read at the second order.
%
%\smallskip
\begin{oss}
    In the case where $S$ is the $2$--dimensional sphere $\mathbb{S}^2$ and $\phi_p$ is the normalized stereographic projection with pole at $p$, geodesics passing through $p$ are mapped to geodesics in $\R^2$. Hence, in this case, we always have $\theta(t)\equiv \theta$.
\end{oss}
\par\noindent
{\em Step 2.}
We now estimate the angles $\beta_i(t)$ that the two curves $\gamma_{i,p}$ make at any time $t\in[0,\delta]$ with the vector $\gamma_{1,p}(t)-\gamma_{2,p}(t)$.
Recalling that $\gamma_{i,p}(0)=\phi_p(p)=(0,0)$, equation \eqref{expansion-i} gives
\begin{equation}
\label{gamma-i-p}
\gamma_{i,p}(t) = (d\phi_p)_p\,v_i\, t + o(t^2),\quad i=1,2.
\end{equation}
In the 
following lines, for the sake of readability, we denote $\gamma_{i,p}$ by $\gamma_i$. 
\noindent
We have
    $$\lVert \dot{\gamma}_i\rVert^2
= \lVert (d\phi_p)_p\,v_i\rVert^2  + o(t)
= 1 + o(t),\quad \lVert \dot{\gamma}_i\rVert^{-1}
=  1  + o(t),\quad
\frac{\dot{\gamma}_i}{\lVert \dot{\gamma}_i\rVert}
=  (d\phi_p)_p\,v_i+ o(t).$$
Moreover,
 $$\gamma_2(t)-\gamma_1(t)
= ((d\phi_p)_p\,v_2-(d\phi_p)_p\,v_1)\, t + o(t^2)$$
\begin{equation}
\lVert \gamma_2(\delta)-\gamma_1(\delta)\rVert=2\sin(\alpha/2)\delta+o(\delta)
\end{equation}
$$
\lVert \gamma_2-\gamma_1\rVert^{-1}
= \frac{1}{\lVert (d\phi_p)_p\,v_2-(d\phi_p)_p\,v_1\rVert t }
\left(
1 + o(t)
\right).$$ Finally,
$$\frac{\gamma_2-\gamma_1}{\lVert \gamma_2-\gamma_1\rVert}
= \frac{(d\phi_p)_p\,v_2-(d\phi_p)_p\,v_1}{\lVert (d\phi_p)_p\,v_2-(d\phi_p)_p\,v_1\rVert }
+ o(t).$$
The angles $\beta_1(t),\beta_2(t)$ are given by
$$\cos\beta_i(t) \coloneqq (-1)^i\,\frac{\gamma_2-\gamma_1}{\lVert \gamma_2-\gamma_1\rVert}\cdot
\frac{\dot{\gamma}_i}{\lVert \dot{\gamma}_i\rVert}
=
\frac{1}{\lVert (d\phi_p)_p\,v_2-(d\phi_p)_p\,v_1\rVert}
\Bigl\{1-\langle v_1,v_2\rangle_{g}+o(t)\Bigr\}\,,$$
for $i=1,2$.
Moreover, we observe that
$$
\arccos\!\left(\frac{1-\langle v_1,v_2\rangle_{g}}{\lVert (d\phi_p)_p\,v_2-(d\phi_p)_p\,v_1\rVert}\right)= \arccos\!\left(\sin\alpha/2\right),
$$
where
$$
\arccos\!\left(\sin\alpha/2\right)=\dfrac{\pi}{2}-\dfrac{\alpha}{2}=\dfrac{\theta}{2}.
$$
Then, we obtain
\begin{equation}\label{beta_i}
    \beta_1(t) = \dfrac{\theta}{2} + o(t),\quad
\beta_2(t) = \dfrac{\theta}{2} + o(t) .
\end{equation}
%
%\smallskip
\par\noindent
{\em Step 3.}
For $i=1,2$, let $\Gamma_{i,p,\delta}$ be the affine line with base point $\gamma_{i,p}(\delta)$ in the direction $\dot\gamma_{i,p}(\delta)$. The angle between $\dot\gamma_{1,p}(\delta)$ and $\dot\gamma_{2,p}(\delta)$ is $\alpha+o(\delta)$. If it is equal to $\pi$, that is, $\det(\dot\gamma_{1,p}(\delta),\dot\gamma_{2,p}(\delta)) = 0$,
the red triangle in Figure \ref{figura} is degenerate.

Otherwise, we let $Q$ be the point of intersection between $\Gamma_{1,p,\delta}$ and $\Gamma_{2,p,\delta}$.
It is given by the unique solution $(s_1,s_2)\in\R^2$ to equation
$$
\gamma_{1,p}(\delta)+s_1 \dot\gamma_{1,p}(\delta)=\gamma_{2,p}(\delta)+s_2\dot\gamma_{2,p}(\delta).
$$
Using that $\det(\dot\gamma_{1,p}(\delta),\dot\gamma_{2,p}(\delta)) \not= 0$, by Cramer's rule we obtain
$$
s_1=\dfrac{\det\left(\gamma_{1,p}(\delta)-\gamma_{2,p}(\delta),\dot \gamma_{2,p}(\delta)\right)}{\det(\dot \gamma_{1,p}(\delta),\dot\gamma_{2,p}(\delta))},\qquad s_2=\dfrac{\det\left(\gamma_{1,p}(\delta)-\gamma_{2,p}(\delta),\dot \gamma_{1,p}(\delta)\right)}{\det(\dot \gamma_{1,p}(\delta),\dot\gamma_{2,p}(\delta))}.
$$
Using the Taylor expansions of the curves $\gamma_{i,p}(t)$ from \eqref{gamma-i-p},
%$$  (d\phi_p)_p\,v_i\, t + o(t^2),\quad i=1,2, $$ 
we have
$$
s_i(t)=\dfrac{t\sin(\alpha)+o(t)}{\sin(\alpha)+o(t)}=t+o(t)\,,\quad i=1,2.
$$
As a consequence, since the length of the edges of the red triangle in Figure \ref{figura} are
$$\bar \ell_i=\lVert Q-\gamma_{i,p}(\delta)\rVert=s_i(\delta)\lVert\dot\gamma_{i,p}(\delta)\rVert,
$$
we obtain the relation
\begin{equation}
 \bar \ell_i=\delta+o(\delta) .
\end{equation}
\subsection{The curve $\gamma_p(P)$}\label{sec:costruzione_curva}
If the angle between $\dot\gamma_{1,p}(\delta)$ and $\dot\gamma_{2,p}(\delta)$ is equal to $\pi$, and hence the red triangle in Figure \ref{figura}
is degenerate, we let $\gamma_p(P)$ be the segment with end points $\gamma_{1,p}(\delta)$ and $\gamma_{2,p}(\delta)$. In this case, we have
$$
\lvert k_{\R^2}(\gamma_p(P))\rvert=0.
$$

Otherwise, with the notation previously introduced, without loss of generality we can suppose that $s_1(\delta)\leq s_2(\delta)$. Setting $t_0=s_2(\delta)-s_1(\delta)$,
% be the difference of the lengths of the two edges.
the curve $\gamma_p(P)$ is parameterized by
$$
\gamma_p(P)(t)=\begin{cases}    \gamma_2(\delta)+t\dot\gamma_2(\delta)& t\in[0,t_0],\\
\Gamma_p(P)(t) &t\in[t_0,t_0+R\theta] ,
      \end{cases}
$$
where $\Gamma_p(P)$ is a piece of a circumference of radius
$$
R=\dfrac{\ell}{\tan\left(\dfrac{\pi-\alpha(\delta)}{2}\right)},
$$
in such a way that $\gamma_p(P)$ is of class $C^1$, with unit tangent vectors
equal to $\dot\gamma_2(\delta)/|\dot\gamma_2(\delta)|$ at the initial point, and to
$\dot\gamma_1(\delta)/|\dot\gamma_1(\delta)|$ at the final point.
Recall that we have:
$$\begin{cases}
    \ell=\tfrac{1}{2}\min\Big\{\bar\ell_1,\bar\ell_2\Big\}=\delta/2 +o(\delta)
\\
(\pi-\alpha(\delta))/2=\theta/2+o(\delta)\,.
\end{cases}
$$
The length of the curve is $$\mathcal{L}(\Gamma_p(P))=R\theta(\delta),\qquad \pi-\alpha(\delta)=\theta(\delta),\qquad \theta(\delta)=\theta+o(\delta).$$
Therefore, we have $$\lvert k_{\R^2}(\gamma_p(P))\rvert^p=\begin{cases}
    0 & t\in[0,t_0],\\
R^{-p} &t\in[t_0,t_0+R\theta]
\end{cases},$$
and the expansion of the radius:
$$R=\dfrac{\delta+o(\delta)}{2\tan(\theta/2)+o(\delta)}.$$
\subsection{The curve $\gamma(P)$}\label{sec:gammaP}
We are in position to define the piecewise smooth curve $\gamma(P)$ associated to an open polygonal $P$, given by the junction of $k$ consecutive and oriented geodesic arcs $\gamma_i$ meeting at the
% with edge length
%$$ 
%{\mathcal L}(\gamma_i)=\ell\quad\forall\,i=1,\ldots,k
%$$
vertices $p_i$, with corresponding turning angles $\theta_i$, for $i=1,\ldots,k-1$.
Without loss of generality, we shall tacitly assume that $0<\theta_i<\pi$ for every $i$.

If $P$ is equilateral, and the edge length of $P$ is sufficiently small, for every $i$ we consider half of the arcs meeting at $p_i$, and construct the curve $\gamma_{p_i}(P)$ through the map $\phi_{p_i}$, so that the estimates obtained in the previous steps hold true.
%We call $\ell=\bar\ell/2$, see \eqref{lato},  around $p_i$. 
If $P$ is not equilateral, but its mesh is sufficiently small, for every $i=0,\ldots,k-1$ we let $\ell_i=\min\{\mathcal L(\gamma_i),\mathcal L(\gamma_{i+1})\}$, and consider the pieces of the two arcs $\gamma_i$ and $\gamma_{i+1}$ with one end point given by $p_i$ and length equal to $\ell_i/2$. We then proceed as above to define the curves $\Gamma_i\coloneqq \phi_{p_i}\circ\gamma_{p_i}(P)$, for $i=1,\ldots,k-1$.
Finally, we define $\gamma(P)$ by connecting the end point of $\Gamma_i$ to the initial point of $\Gamma_{i+1}$ with a piece of the arc $\gamma_{i+1}$, for every $i$.
%The curve $\gamma(P)$ is given by the junction of the curves $\Gamma_i:=\phi_{p_i}%^{-1}\circ\gamma_{p_i}(P)$, for $i=1,\ldots,k-1$.
Notice that, if the polygonal is equilateral, the end points of $\Gamma_i$ are the middle points of the geodesic arcs $\gamma_i$ and $\gamma_{i+1}$. The curve this way obtained is of class $C^1$ and is $C^2$ outside the middle points of the arcs $\gamma_i$. If the polygonal is not equilateral, the curve obtained is still of class $C^1$ and is $C^2$ outside a finite set of points, where the gluing of curves is made.
Therefore, the definition of $p$-rotation in \eqref{p-rotation} makes sense.

By the previous construction, and on account of Theorem \ref{isothermal _coordinates},
Lemma \ref{LemmaComparison} and eq.  \eqref{intcurv}, we 
obtain the following
%More precisely, we recall that the positive radius $\delta_1(\epsilon,S)$ is given by %Theorem \ref{isothermal _coordinates}.
%We have:
%
\begin{prop}\label{PgammaP}
%For every $\epsilon>0$, there exists $\delta_1=\delta_1(\epsilon,S)>0$ such that 
If the mesh of the polygonal $P$ is small,
%$$\ell\leq \delta\leq\delta_1(\epsilon,S) \,, $$
the corresponding curve $\gamma(P)$ satisfies
\begin{equation}
\label{intgammap}
\int\limits_{\gamma(P)}\lvert k_S(\gamma(P))\rvert_g^p\, d{s}=\sum_{i=1}^{k-1}\int\limits_{\gamma_{p_i}(P)}e^{(1-p)\lambda_{p_i}(\gamma_{p_i}(P)(s))}\left\vert k_{\R^2}(\gamma_{p_i}(P)(s)) +\partial_{\hat\gu}\lambda_{p_i}(\gamma_{p_i}(P)(s)) \right\vert^p
\, ds \,,
\end{equation}
where for $i=1,\ldots,k-1$, we have 
$$
\begin{aligned}
&\displaystyle\int_{\gamma_{p_i}(P)}\lvert k_{\R^2}(\gamma_{p_i}(P))\rvert^p\, ds=R_i^{1-p}\theta_i(1+o(\delta)),\\  &
R_i=\dfrac{\delta+o(\delta)}{2\tan(\theta_i/2)+o(\delta)},
\\
 &R_i \theta_i\,(1+o(\delta))\leq \delta +o(\delta),\\ &
\Big|\partial_{\hat\gu}\lambda_{p_i}(\gamma_{p_i}(P)(s))\Big|\leq C\delta.
\end{aligned}
$$
\end{prop}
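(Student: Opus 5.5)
The plan is to split $\gamma(P)$ into its pieces and treat each one separately. There are two kinds of pieces. The first kind are the connecting sub-arcs of the geodesic edges $\gamma_{i+1}$, joining the end point of $\Gamma_i$ to the initial point of $\Gamma_{i+1}$. These are geodesics of $S$, so $k_S=0$ on them and they contribute nothing. The second kind are the pieces $\Gamma_i=\phi_{p_i}^{-1}\circ\gamma_{p_i}(P)$ lying in $U_{p_i}$. The integral is additive over a finite partition, and the gluing points form a finite set, so the left side of \eqref{intgammap} is the sum over $i$ of $\int_{\Gamma_i}|k_S|_g^p\,ds$. For each $i$ we apply Lemma \ref{LemmaComparison} with $M=S$, $N=\R^2$ and $f=\phi_{p_i}$. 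Then the change of variables \eqref{intcurv}, with $ds_S=e^{\lambda}ds_{\R^2}$, produces the factor $e^{(1-p)\lambda_{p_i}}$. The lemma needs $C^2$ regularity, and $\gamma_{p_i}(P)$ is a segment followed by a circular arc, which is $C^1$ and piecewise $C^2$. So we apply the lemma on each $C^2$ sub-piece and add. This gives the identity \eqref{intgammap}.

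For the listed estimates we use the construction of Section \ref{sec:costruzione_curva}, with $\delta$ replaced by the local half-edge length $\ell_i/2$, which is bounded by the mesh. On the segment part $k_{\R^2}=0$. On the circular part $|k_{\R^2}|=R_i^{-1}$ and the length is $R_i\theta_i(\delta)$, so the integral equals $R_i^{1-p}\theta_i(\delta)$. Since Step 1 gives $\theta_i(\delta)=\theta_i+o(\delta)$, we can write this as $R_i^{1-p}\theta_i(1+o(\delta))$. This requires $\theta_i$ bounded away from zero relative to the error, or else we read the $o(\delta)$ in relative form. The expansion of $R_i$ is the formula $R=\ell/\tan(\theta(\delta)/2)$, combined with $\ell=\delta/2+o(\delta)$ from Step 3 and $\theta(\delta)/2=\theta/2+o(\delta)$.

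For the length bound we use that the arc fits inside the red triangle. The arc is tangent to both sides of the triangle at distance $\ell\le\frac12\min\{\bar\ell_1,\bar\ell_2\}$ from the vertex $Q$. Its length $R_i\theta_i(\delta)$ is at most the sum of the two tangent lengths, $2\ell$, because the arc is the shorter path inside the triangle. Hence $R_i\theta_i(1+o(\delta))\le 2\ell\le\delta+o(\delta)$. Alternatively, one can compute directly $R\theta=\ell\,\theta/\tan(\theta/2)\le 2\ell$, since $x\le\tan x$.

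Finally, $|\partial_{\hat\gu}\lambda_{p_i}|\le\|\nabla\lambda_{p_i}\|\le C_0\delta$ by \eqref{stimeIsoterme}. This holds because $\gamma_{p_i}(P)$ stays in $\phi_{p_i}(U_{p_i})$: it lies inside the triangle, whose vertices are at distance $O(\delta)$ from the origin.

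The main obstacle is uniformity. The $o(\delta)$ terms in Steps 1–3 come from Taylor expansions at each $p_i$, and we need them uniform in $p_i$ and in the directions $v_1,v_2$. We plan to get this from the uniform Hessian bound \eqref{Heis} and compactness of $S$; this makes the remainders in \eqref{expansion} in fact $O(t^2)$ uniformly. The degenerate situations need separate care. When the red triangle is degenerate, both sides of \eqref{intgammap} reduce to integrals involving $\partial_{\hat\gu}\lambda$ only, so the estimates are trivial. When $\theta_i$ is small relative to $\delta$, the relative form of the $o(\delta)$ errors must be checked.
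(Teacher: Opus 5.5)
Your route is the paper's own. Its proof only cites the construction of Section~\ref{sec:costruzione_curva}, Lemma~\ref{LemmaComparison}, \eqref{intcurv} and \eqref{stimeIsoterme}. Several parts of your proposal are fine: the splitting of $\gamma(P)$ (geodesic connectors contribute nothing, and the lemma is applied on each $C^2$ piece of $\gamma_{p_i}(P)$), the value $R_i^{1-p}\theta_i(\delta)$ of the Euclidean integral, the bound $R_i\theta_i(\delta)=2\ell\,\frac{\theta_i(\delta)/2}{\tan(\theta_i(\delta)/2)}\le 2\ell$, and the bound on $\partial_{\hat\gu}\lambda_{p_i}$.

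The gap is the point you leave open at the end. It is not just a matter of reading the errors in relative form, and the paper does not address it either: Steps~1--3 divide by $\sin\alpha$ and by $\det(\dot\gamma_{1,p}(\delta),\dot\gamma_{2,p}(\delta))$, and neither is uniform as $\theta_i\to 0$. Since $c\in W^{2,p}\subset C^1$, the $\theta_i$ tend to zero (cf. \eqref{stima_angolo-lato}) and nothing bounds them from below, while all errors are absolute. By \eqref{Heis} one has $\ddot\gamma_{i,p}(t)=O(t)$, so the remainders are $O(t^3)$ in \eqref{expansion} and $O(t^2)$ in the tangent directions. With the remainder $O(t^2)$ in \eqref{expansion} that you write, Step~3 would already fail for $\theta_i\sim\delta$, which is the typical size. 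In Step~3 the Cramer denominators are $\pm\sin\theta_i+O(\delta^2)$ and the numerators are $\pm\delta\sin\theta_i+O(\delta^3)$, so $|s_i|=\delta\,(1+O(\delta^2/\theta_i))$. Hence $\bar\ell_i=\delta+o(\delta)$, and with it $\ell=\delta/2+o(\delta)$, the formula for $R_i$ and the length bound, follow only when $\delta^2=o(\theta_i)$.

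This is not an artefact of the expansion. The factor $\lambda=\frac h2(x^2-y^2)$ satisfies \eqref{normalizzazione} and \eqref{missta}, which leave the trace-free part of $D^2\lambda_p(0,0)$ undetermined. It is the conformal factor of the flat metric $|F'(z)|^2|dz|^2$ with $F(z)=\int_0^z e^{h\zeta^2/2}\,d\zeta$, so the geodesics through $0$ are $z(t)=F^{-1}(te^{i\phi})=te^{i\phi}-\frac h6t^3e^{3i\phi}+O(t^5)$. At both end points the tangent line is then rotated, relative to the ray from the vertex, by the same angle $\rho\approx-\frac h3\delta^2\sin2\phi$ (up to $O(\delta^2\theta_i)$). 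The two lines therefore meet at distance about $2|\rho|\delta/\theta_i$ from the vertex, which gives $\min\{\bar\ell_1,\bar\ell_2\}\approx\delta(1-2|\rho|/\theta_i)$. For $\theta_i<2|\rho|$ they meet beyond one end point, and no circular arc with the prescribed end tangents exists.

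Similarly, for $\lambda=c\,x^2y$ the image of the geodesic through $0$ tangent to the $x$-axis satisfies $\ddot y\approx cx^2$, so $\theta_i(\delta)\approx\frac23|c|\delta^3$ when $\theta_i=0$. Thus $\theta_i(\delta)=\theta_i(1+o(\delta))$ fails for $0<\theta_i\ll\delta^3$.

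What is missing is a separate treatment of the vertices where $\theta_i$ is not large compared with these errors, say $\theta_i\le\delta^{2-\eta}$ with $0<\eta<1$. At those vertices you can replace the circular arc by another $C^1$ interpolant with curvature $O((\theta_i+\delta^2)/\delta)=O(\delta^{1-\eta})$; these vertices then contribute at most $C\delta^{p(1-\eta)}$ in total. At the remaining vertices your argument, with the sharp remainders above, gives the estimates with errors that Theorem~\ref{Teo1} can absorb.
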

\section{The $p$-rotation of inscribed polygonals}

In this section, we let \(c:[0,L]\to S\) be a rectifiable open curve
parameterized by arc length.
If \(c\in W^{2,p}([0,L];S)\), then every projected piece \(\Gamma_{\vert_J}\) of \(c\), defined on a subinterval \(J\subset [0,L]\), belongs to \(W^{2,p}(J;\mathbb{R}^2)\). Moreover, by the condition \eqref{stimeIsoterme}, there exists a positive constant \(C=C(c,\epsilon)>0\), such that
\[
\|k_{\mathbb{R}^2}(\Gamma_{\vert_J})\|_{L^p}\leq C.
\]
In particular, for every \(q\in (0,p)\), 
we have
\begin{equation}\label{ACstima}
\int_J |k_{\mathbb{R}^2}(\Gamma_{\vert_J}(t))|^{q}\,dt
\leq \delta^{1-q/p} C^q,
\qquad {{\text{if}}\quad }
|J|\leq \delta.
\end{equation}
Moreover, recall that the positive radius \(\delta_1(\epsilon,S)\) is the one provided by Theorem~\ref{isothermal _coordinates}.
Finally, given an inscribed polygonal curve \(P\ll c\), we shall tacitly use the notation introduced above.
In this section we prove the following
\begin{thm}\label{Teo1}
Let $c\in W^{2,p}([0,L],S)$
be parameterized in arc-length.
For every small $\epsilon>0$, there exist a positive constant $C_\epsilon$ and a small error $g(\epsilon)$, satisfying the bounds
$$
|C_\epsilon-1|\leq\epsilon\,,\quad g(\epsilon)\xrightarrow[]{\epsilon\rightarrow 0}0\,,
$$
such that for every equilateral polygonal $P$ inscribed in $c$, with modulus
$$\mu_c(P)=\delta\leq %\min\{ \delta_{AC}(c,\epsilon),\,
\delta_1(\epsilon,S),%\}\,,
$$
we can estimate
\begin{equation}
\label{stima_energia_poly}
\mathbf{k}_p(P) :=    \int_{\gamma(P)}\vert k_{S}(\gamma(P)(s)) \vert_g^p\, ds\leq C_\epsilon\sum_{i=1}^{k-1} \delta^{-p}\theta_i^{p}(\delta+o(\delta))+g(\epsilon).
    \end{equation}
\end{thm}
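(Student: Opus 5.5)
We start from the decomposition of Proposition \ref{PgammaP}. Since $\gamma(P)$ is made of the arcs $\Gamma_i$ joined by pieces of geodesics, and geodesic pieces have zero curvature, we have
\[
\mathbf{k}_p(P)=\sum_{i=1}^{k-1}\int_{\gamma_{p_i}(P)}e^{(1-p)\lambda_{p_i}}\bigl|k_{\R^2}(\gamma_{p_i}(P))+\partial_{\hat\gu}\lambda_{p_i}\bigr|^p\,ds .
\]
By \eqref{stimeIsoterme} we have $|e^{(1-p)\lambda_{p_i}}|\le (1-\epsilon)^{1-p}$, and we put this factor into $C_\epsilon$. This factor tends to $1$ as $\epsilon\to0$; after rescaling $\epsilon$ we may assume $|C_\epsilon-1|\le\epsilon$.

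Next we expand each integrand with the upper bound in Proposition \ref{stima_p}, taking $a=|k_{\R^2}|$ and $b=|\partial_{\hat\gu}\lambda_{p_i}|\le C\delta$:
\[
|k+\partial\lambda|^p\le |k|^p+C|k|^{p-1}C\delta+C(C\delta)^p .
\]
\begin{itemize}
\item The main term is computed with Proposition \ref{PgammaP}. It equals $R_i^{1-p}\theta_i(1+o(\delta))$, with $R_i^{-1}=\delta^{-1}\bigl(2\tan(\theta_i/2)+o(\delta)\bigr)(1+o(1))$. Since $c\in W^{2,p}$ and $\mu_c(P)$ is small, the turning angles $\theta_i$ are small, uniformly along $P$. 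Hence $2\tan(\theta_i/2)=\theta_i(1+O(\theta_i^2))$, and the main term becomes
\[
R_i^{1-p}\theta_i=\delta^{1-p}\theta_i^{p}(1+o(1))=\delta^{-p}\theta_i^p(\delta+o(\delta)).
\]
The $(1+o(1))$ factors are absorbed into $C_\epsilon$, taking $\delta_1$ smaller if needed.
\item The last term is bounded by $C\delta^p\cdot\mathcal L(\Gamma_i)\le C\delta^{p+1}$. Summed over the $k-1\lesssim L/\delta$ vertices, it gives $O(\delta^p)$, which is $\le g(\epsilon)$ since $\delta\le\delta_1(\epsilon,S)$.
\end{itemize}

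The main obstacle is the mixed term $\delta\int|k_{\R^2}|^{p-1}$. On $\Gamma_i$ this is $\delta R_i^{1-p}\theta_i\approx \delta^{2-p}\theta_i^{p-1}$. We handle it by Hölder over the sum:
\[
\sum_i \delta^{2-p}\theta_i^{p-1}\le \delta\Bigl(\sum_i\delta^{1-p}\theta_i^p\Bigr)^{(p-1)/p}(k-1)^{1/p}\delta^{(1-p)/p+1-1/p}\cdots
\]
More simply, we use Young's inequality $\delta\,a^{p-1}\le \eta a^p+C_\eta\delta^p$ with $\eta=\epsilon$. The $\eta$-part is absorbed into $C_\epsilon\sum_i\delta^{-p}\theta_i^p(\delta+o(\delta))$. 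The remainder is $C_\epsilon\delta^p\sum_i\mathcal L(\Gamma_i)\le C_\epsilon\delta^p L$, which becomes part of $g(\epsilon)$ once $\delta_1(\epsilon,S)$ is chosen with $C_\epsilon\delta_1^pL\to0$.

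The key delicate point is to justify that the turning angles $\theta_i$ are uniformly small and that the $o(\delta)$ terms are uniform in $i$. For this we would invoke \eqref{ACstima} on each arc of $c$ between consecutive vertices: the total curvature of such a piece is at most $C\delta^{1-1/p}$. This controls $\theta_i$ through the Gauss–Bonnet comparison of the geodesic polygon with $c$, and the compactness of $S$ gives the uniformity of the Taylor remainders in Theorem \ref{isothermal _coordinates}.
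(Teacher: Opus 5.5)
Your proof is correct. It follows the paper's skeleton: the splitting of Proposition \ref{PgammaP}, absorbing $e^{(1-p)\lambda_{p_i}}\le(1-\epsilon)^{1-p}$ into $C_\epsilon$, Proposition \ref{stima_p} with $b\le C_0\delta$, and summing the $\delta^{p+1}$ term via \eqref{kd}. The genuine difference is how you handle the mixed term.

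The paper first proves the quantitative angle bound \eqref{stima_angolo-lato}, $\theta_i\le C\delta^{(p-1)/p}$. It does this with the averaging argument of \cite{Bruckstein01112001} (shifted partitions, Jensen, and \cite[Theorem 5.3]{MS1}). It then inserts this bound into $\delta^2R_i^{1-p}\approx\delta^{3-p}\theta_i^{p-1}$, which gives $\delta^{1+1/p}$ per vertex and $C\delta^{1/p}$ overall.

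Your pointwise Young inequality $\delta a^{p-1}\le\eta a^p+C\eta^{1-p}\delta^p$ instead turns the mixed term into a factor $(1+\epsilon)$ on the main term plus a remainder $C\epsilon^{1-p}\delta^pL$. So you only need the $\theta_i$ to be uniformly small, for $2\tan(\theta_i/2)\approx\theta_i$. Your use of \eqref{ACstima} with $q=1$ on each arc, combined with a chord-versus-tangent comparison, already gives $\theta_i\lesssim\delta^{1-1/p}$ at low cost. That comparison is easiest done directly in the chart rather than by Gauss--Bonnet: the Euclidean chord is an average of tangents, and the image geodesics are straight up to $o(\delta)$.

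The price of your route is that $\delta_1(\epsilon,S)$ must be shrunk so that $\epsilon^{1-p}\delta_1^p\to0$, which is harmless. The paper's heavier route pays off elsewhere, because the discrete bound \eqref{stima-basso} it produces is reused in Theorem \ref{lowerbound}.

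Incidentally, your abandoned H\"older line is garbled. On $\Gamma_i$ the mixed term is $\delta R_i^{2-p}\theta_i\approx\delta^{3-p}\theta_i^{p-1}$, not $\delta^{2-p}\theta_i^{p-1}$. Nothing in your argument depends on it.
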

The proof is divided in three step. We first obtain a suitable bound on the number of vertices of $P$, and then a bound on the turning angles. Finally, we obtain the estimate in \eqref{stima_energia_poly}.
\begin{oss}
    When \(S\) is the $2$--dimensional sphere \(\mathbb{S}^2\) and \(\phi_p\) is the normalized stereographic projection with pole at \(p\), the construction introduced in \cite{MSS25} differs slightly from the one presented in Section~\ref{sec:gammaP}. Nevertheless, Theorem~\ref{Teo1} provides the same quantitative estimate needed to prove Theorem~\ref{MainThm}. 
\end{oss}
\subsection{A bound on the number of edges}
 The inscribed polygonal being equilateral, the number $k$ of edges of $P$ satisfies
\begin{equation}
\label{kd}
k\delta\leq L\,,
\end{equation}
where $L$ is the length of the curve. If the polygonal is not equilateral, but we have a lower bound on the length of the sides (e.g. they are at least $\frac{\delta}2$), then a similar estimate holds true, replacing $L$ with an absolute constant $C$, only depending on $c$.
\subsection{A bound on the turning angles}
\label{Sec:anglebound}
Let $0=t_0<t_1<\ldots<t_{k-1}<t_k=L$ be such that $p_i=c(t_i)$,
for $i=1,\ldots,k-1$, 
is the $i$-th vertex of the inscribed equilateral polygonal $P$, with turning angle $\theta_i$.
Using arguments from \cite{Bruckstein01112001}, we show that
\begin{equation}
\label{stima_angolo-lato}
\theta_i\leq C\delta^{(p-1)/p}(1+o(\delta))\quad\forall\,i=1,\ldots,k-1\,.
\end{equation}
\par Let $\ell$ be the edge length of $P$. We choose a reparametrization $\psi:[0,{k}\ell]\rightarrow [0,L]$  such that
letting $s_i=i\,\ell$, 
then $\psi(s_i)=t_i$
for $i=0,\ldots,k$, and $\psi$ is affine in each interval $[s_{i-1},s_{i}]$, with
$$\psi'(s)_{\vert(s_{i-1},s_{i})}=\dfrac{t_{i}-t_{i-1}}{s_{i}-s_{i-1}}\quad\forall\, i\in\{1,\ldots,k\} .$$
Following \cite{Bruckstein01112001}, we have
$$
 \tilde C_3\geq  \int_0^L |k_{S}(c(s))|_g^p\, ds\geq\sum_{i=1}^{{k}-1}\dfrac{1}{\ell}\int_0^\ell\left(
\int_{\psi(s_{i-1}+a)}^{\psi(s_{i}+a)}
|k_{S}(c(s))|_g^p\, ds\right) \,da\,.
$$
 Let $\Gamma_i=\varphi_{p_i}(c_{\vert_{[t_{i-1},t_{i+1}]}})$ and $\Gamma_{a,i}=\Gamma_{ i\vert_{ [\psi(s_{i-1}+a),\psi(s_i+a)]}}$, for $i=1,\ldots,k-1$ and $a\in(0,\ell)$. 
{Applying Lemma~\ref{LemmaComparison} 
{and eq. \eqref{intcurv}}
to the local isothermal coordinates in Theorem
 \ref{isothermal _coordinates}, we compare the geodesic curvature of the restricted curves %$$c_i\coloneqq c_{|_{[\psi(s_{i-1}+a),\psi(s_{i}+a)]}}$$
with the curvature in $\R^2$ of their images, namely
$$
\int\limits_{\psi(s_{i-1}+a)}^{\psi(s_{i}+a)}\vert k_{S}(c(s)) \vert_g^p\, ds=\int\limits_0^{\mathcal{L}(a,i)}e^{(1-p)\lambda_{p_i}(\Gamma_{a,i}(s))}\left\vert k_{\R^2}(\Gamma_{a,i}(s)) +\partial_{\hat\gu_i}\lambda_{p_i}(\Gamma_{a,i}(s)) \right\vert^p
\, ds \,,
%+\dfrac{1}{2}e^{\lambda(\Gamma_i(s))}\Gamma_i(s)\cdot \tilde{\gu}_i(s)\right\vert^p
$$
where {  $\mathcal{L}(a,i)\coloneqq\mathcal{L}\left({\Gamma_{a,i}}\right)\leq \tilde C_\epsilon\delta$.}
For every $i=1,\ldots,k-1$ and $a\in(0,\ell)$, thanks to Proposition~\ref{stima_p}, we have
$$\begin{array}{c}
\displaystyle\int_0^{\mathcal{L}(a,i)}e^{(1-p)\lambda_{p_i}(\Gamma_{a,i}(s))}\left\vert k_{\R^2}(\Gamma_{a,i}(s)) +\partial_{\hat\gu_i}\lambda_{p_i}(\Gamma_{a,i}(s)) \right\vert^p
\, ds
%\underset{Prop. \ref{stima_p}}
{\ \geq} \\
\qquad \qquad \displaystyle \tilde{C}_\epsilon\int_0^{\mathcal{L}(a,i)}|k_{\R^2}(\Gamma_{a,i})|^p\,ds  -C_0\delta\int_0^{\mathcal{L}(a,i)}|k_{\R^2}(\Gamma_{a,i})|^{p-1}\,ds\,,
\end{array}
$$
where $C\coloneqq C(\epsilon,p)$ is a positive constant that is bounded as $\epsilon\rightarrow 0$, that can change line by line, and $\tilde{C}_\epsilon\to 1$ as $\epsilon\to 0$.   We denote by $I_1,I_2$ the two terms
$$
\begin{array}{c}
\displaystyle
I_1\coloneqq\sum_{i=1}^{k-1}\tilde{C}_\epsilon\dashint_0^\ell\left(\int_0^{\mathcal{L}(a,i)}|k_{\R^2}(\Gamma_{a,i}(s))|^p\,ds\right)\,da, \\
\displaystyle
I_2\coloneqq\sum_{i=1}^{k-1}C\delta\dashint_0^\ell \left(\int_0^{\mathcal{L}(a,i)}|k_{\R^2}(\Gamma_{a,i}(s))|^{p-1}\,ds\right)\,da \,.
\end{array}
$$
In the Euclidean setting, since the curve {$\Gamma_{a,i}$} is parameterized by arc length, then its scalar curvature is given by the norm of its second derivative 
{$\ddot\Gamma_{a,i}(s)$}, i.e.,
 $$
\int_0^{\mathcal{L}(a,i)}|k_{\R^2}(\Gamma_{a,i}(s))|^p\, ds=\int_0^{\mathcal{L}(a,i)}\norm{\ddot\Gamma_{a,i}(s)}^p\, ds\,.
$$

Applying twice the Jensen inequality and supposing $t_{i+1}-t_i\geq t_i-t_{i-1}$, we have
$$
\begin{aligned}     \tilde{C}_\epsilon\dashint_0^\ell   \Biggl(\int_0^{\mathcal{L}(a,i)}&\norm{\ddot\Gamma_{a,i}(s)}^p\, ds\Biggr)\,da\geq\tilde{C}_\epsilon\dashint_0^\ell \| \dot\Gamma_{a,i}(\psi(s_{i}+a))-\dot\Gamma_{a,i}(\psi(s_{i-1}+a)) \|^p\, da \\
      & \geq  \tilde{C}_\epsilon\ell^{1-p}\left\|\int_0^\ell\left(\dot\Gamma_{a,i}(\psi(s_{i}+a))-\dot\Gamma_{a,i}(\psi(s_{i-1}+a))\right)\, da\right\|^p \\   
      & \geq\tilde{C}_\epsilon\ell^{1-p}\left\|  \dfrac{\Gamma_{a,i}(t_{i+1})-\Gamma_{a,i}(t_i)}{t_{i+1}-t_i}- \dfrac{\Gamma_{a,i}(t_{i})-\Gamma_{a,i}(t_{i-1})}{t_{i}-t_{i-1}}\right\|^p.
   \end{aligned}$$}
   Finally, using \cite[Theorem 5.3]{MS1}, we have
$$
\sum_{i=1}^{k-1}\tilde{C}_\epsilon\dashint_0^\ell \left(
\int_0^{\mathcal{L}(a,i)}\norm{\ddot\Gamma_{a,i}(s)}^p\, ds \right)\,da \geq \sum_{i=1}^{k-1} \tilde{C}_\epsilon\,\tilde\ell_i^{1-p}\,\tilde\theta_i^p,
$$  where $\tilde\ell_i$ and $\tilde\theta_i$ are the length and the turning angle of the Euclidean polygonal with two edges inscribed in 
$[\Gamma_i(t_{i-1}),\Gamma_i(t_{i}),\Gamma_i(t_{i+1})].$

Applying the inequality in \eqref{ACstima}, we have
$$\int_0^{\mathcal{L}(a,i)}\left\vert k_{\R^2}(\Gamma_{a,i}(s))\right\vert^{q}\,ds \leq C\delta^{(p-q)/p}.$$
Then, using $q=p-1$ we obtain
$$%\begin{aligned}
    I_2=\sum_{i=1}^{k-1}C\delta \dashint_0^\ell \left( \int_0^{\mathcal{L}(a,i)}|k_{\R^2}(\Gamma_{a,i}(s))|^{p-1}\,ds\right)\,da
\leq
\sum_{i=1}^{k-1}C\delta^{1+1/p} \,.
%\end{aligned} 
$$
Summing over the index $i$ and using the estimate in \eqref{kd} we get
\begin{equation}\label{secondadis}
    \begin{aligned}
  I_2\leq C\delta^{\tfrac{1}{p}} =: g(\epsilon)\,.
\end{aligned}
\end{equation}
By construction, we have
$$
\tilde C_3 \geq I_1-I_2
$$
Then, there exists a constant $C_3$ depending only on the curve $c$ and on $\epsilon$ such that
$$
\sum_{i=1}^{k-1} \tilde{C}_\epsilon\,\tilde\ell_i^{1-p}\,\tilde \theta_i^p\leq C_3 .
$$
%\begin{prop}\label{euclll}
Putting all together, we obtain \begin{equation}\label{stima-basso}
    \int_0^L |k_{S}(c(s))|_g^p\, ds\geq \sum_{i=1}^{k-1} \tilde{C}_\epsilon\,\tilde\ell^{1-p}\,\tilde \theta_i^p-g_2(\epsilon).
\end{equation}
\par Now, by \eqref{expansion} we infer that \begin{equation}\label{stima-lati-angolis}
    \tilde\ell_i=\delta+o(\delta),\qquad \tilde\theta_i=\theta_i+o(\delta).
\end{equation}
$$$$
Therefore, for every $i$ we obtain
$$
\delta^{1-p}\theta_i^p(1+o(\delta))\leq C_3,
$$
which implies the validity of the estimate in \eqref{stima_angolo-lato}.
%\begin{equation}\label{stima_angolo-lato}
%    \theta_i\leq C_3 \delta^{1/\bar p}(1+o(\delta)),\qquad \bar p=\dfrac{p}{p-1}.
%\end{equation}
\par Finally, for future use we notice that by property \eqref{stima_angolo-lato} we can assume
$$\tan(\theta_i/2)\leq \frac{\theta_i}{2}(1+\epsilon)\quad\forall\,i=1,\ldots,k-1,
$$
and we recall that by the construction
\begin{equation}
\label{stima_Raggio}
    \mathcal{L}(\gamma_{p_i}(P))=R_i(\theta_i+o(\delta))\leq \delta+o(\delta).
\end{equation}
\subsection{Estimate on the $p$-rotation}
We are now in position to prove the inequality \eqref{stima_energia_poly}.

By Proposition \ref{stima_p} and by the estimate \eqref{stimeIsoterme}, we use
$$
a=\vert k_{\R^2}(\gamma_{p_i}(P)(s))\vert=\dfrac{1}{R_i},\qquad b=\lvert\partial_{\hat\gu}\lambda(\gamma_{p_i}(P)(s))\rvert\underset{\eqref{stimeIsoterme}}{\leq}C_0\delta ,
$$
in the formula \eqref{intgammap}. Using estimate \eqref{dis-alto}, we obtain the upper bound
$$
%\hspace{-0.7cm} 
\mathbf{k}_p(P)\leq  C_\epsilon\sum_{i=1}^{k-1}
\int_{\gamma_{p_i}(P)}
\left(\dfrac{1}{R_i^p} +
C
%\sum_{i=1}^{k-1}\int\limits_{\gamma_{p_i}(P)}
\,\dfrac{C_0\delta}{R_i^{p-1}} %\,ds
%+\tilde{C}
%\sum_{i=1}^{k-1}\int\limits_{\gamma_{p_i}(P)}\,\dfrac{C_0^{p-1}\delta^{p-1}}{R_i} %\,ds
+
%\sum_{i=1}^{k-1}\int\limits_{\gamma_{p_i}(P)}
CC_0^p\delta^{p}\right)\,ds \,.
$$
We integrate the quantities on $\gamma_{p_i}(P)$. Using the length estimate in \eqref{stima_Raggio}, we have
$$
%\hspace{-0.7cm}
\mathbf{k}_p(P) \underset{\eqref{stima_Raggio}}{\leq}
C_\epsilon\sum\limits_{i=1}^{k-1}
\Biggl( \dfrac{{\theta_i}(1+o(\delta))}{R_i^{p-1}}
+C%\sum_{i=1}^{k-1}
\,\dfrac{C_0\delta^2(1+o(1))  }{R_i^{p-1}} +%\sum_{i=1}^{k-1}
CC_0^p\delta^{p+1}(1+o(1)) \Biggr)\,,
$$
where $o(1)$ denotes a quantity depending on $\delta$ that goes to zero as $\delta\to 0$.

Now, we deal with each term in the previous line. The constant $ C$ can change line by line.
%\begin{itemize}    \item 
The first term is
$$ %\sum_{i=1}^{k-1}\int_{\gamma_{p_i}(P)}\dfrac{1}{R_i^p}\,ds= 
\sum_{i=1}^{k-1}\dfrac{{\theta_i}(1+o(\delta))}{R_i^{p-1}}= \sum_{i=1}^{k-1}\dfrac{(2\tan({\theta_i}/2)+o(\delta))^{p-1}\theta_i(1+o(1))}{\delta^{p-1}(1+o(1))}\leq \sum_{i=1}^{k-1} \delta^{1-p}\theta_i^{p}(1+o(1)).
$$
%\item 
The second one is
$$
\begin{aligned}
%    \tilde{C}\sum_{i=1}^{k-1}\int_{\gamma_{p_i}(P)}\dfrac{C_0\delta}{R_i^{p-1}}\,ds&\underset{\eqref{stima_Raggio}}{\leq}
C\sum_{i=1}^{k-1}\dfrac{C_0\delta^2(1+o(1))}{R_i^{p-1}}=C\sum_{i=1}^{k-1}\dfrac{C_0\delta^2(2\tan({\theta_i}/2)+o(\delta))^{p-1}(1+o(1))}{\delta^{p-1}(1+o(1))}
\\ %& 
\qquad\qquad
\underset{\eqref{stima_angolo-lato}}{\leq} C\sum_{i=1}^{k-1}\delta^2\delta^{\frac{ {(p-1)^2} }{p}}\delta^{1-p}(1+o(1))= C\sum_{i=1}^{k-1}\delta^{1+\frac{1}{p}}(1+o(1))
\underset{\eqref{kd}}{\leq} C\delta^{\frac{1}{p}}(1+o(1)).
\end{aligned}
$$
%\item  
The last term is $$CC_0^p\sum_{i=1}^{k-1}\int_{\gamma_{p_i}(P)}\delta^{p}\,ds\underset{\eqref{stima_Raggio}}{\leq}C\sum_{i=1}^{k-1}\delta^{p+1}(1+o(1))
\underset{\eqref{kd}}{\leq}  C \delta^{p}(1+o(1)).$$
%\end{itemize}
In conclusion, the inequality \eqref{stima_energia_poly} holds true, where
$$
g(\epsilon)=C\left(\delta^{\frac{1}{p}}(1+o(1))+ \delta^{p}(1+o(1))\right)\leq C\delta^{\frac{1}{p}}(1+o(1))
$$
and Theorem \ref{Teo1} is proved.
%
%%%%%%%%%%%%
%
\section{Proof of the Main Result}
Similarly to the case of spherical curves analyzed in \cite{MSS25}, the proof of Theorem \ref{MainThm} is divided in two parts.
We first show, Theorem \ref{thm1}, that if a curve $c$ has finite $p$-energy, then the expected second order Sobolev regularity holds and we obtain the upper bound estimate of the integral of the $p$-power of the norm of the geodesic curvature.
Then, Theorem \ref{lowerbound}, using the previous results we prove the lower bound estimate. Theorem  \ref{MainThm} readily follows.
\subsection{Upper bound estimate}
Arguing essentially as in the proof of Theorem 5.3 from \cite{MSS25}, we obtain the following
\begin{thm}\label{thm1}
 Let $c$ be a rectifiable and open curve in $S$ parameterized by arc length such that $\mathcal{F}_p(c)<\infty$ for some $p>1$. Then $c \in W^{2,p}(I_L,S)$, where $I_L=[0,L]$ and $L={\mathcal L}(c)$, and
$$
\int_0^L|k_S(c)|_g^p\, ds\leq \mathcal{F}_p(c)<\infty.
$$
\end{thm}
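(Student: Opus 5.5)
We take a sequence $\{P_h\}\ll c$ with $\mu_c(P_h)\to 0$ and $\mathbf{k}_p(P_h)\to \mathcal{F}_p(c)<\infty$. Passing to a subsequence, we may assume $\sup_h \mathbf{k}_p(P_h)\le M<\infty$. Let $\gamma_h:=\gamma(P_h)$ be the inscribed $C^1$ curves of Section~\ref{sec:gammaP}, parameterized by arc length on $[0,L_h]$.

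\textbf{Convergence of the inscribed curves.} Since $\mu_c(P_h)\to0$, we get $d_F(P_h,c)\to 0$. Each piece $\Gamma_i$ of $\gamma_h$ lies in a geodesic triangle of diameter $O(\delta_h)$ near the vertex $p_i$, so also $d_F(\gamma_h,c)\to0$. Inscribed geodesic polygonals with vanishing modulus satisfy $\mathcal L(P_h)\to L$. The bound $R_i\theta_i(1+o(\delta))\le\delta+o(\delta)$ of Proposition~\ref{PgammaP} shows that replacing corners by arcs changes length by $o(1)$, hence $L_h\to L$. We rescale $\gamma_h$ affinely to $[0,L]$, writing $u_h(s)=\gamma_h(L_h s/L)$, so that $|\dot u_h|_g= L_h/L\to 1$.

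\textbf{Uniform bounds.} Through the isometric embedding $\mathbf j$, the maps $\gu_h=\mathbf j\circ u_h$ are bounded in $W^{1,\infty}$. Moreover
\[
\|\ddot\gu_h\|\le |k_S(u_h)|_g (L_h/L)^2+\|\ddot\gu_h^\perp\|,
\]
where the normal part is uniformly bounded by the second fundamental form. The curves are $C^1$ and piecewise $C^2$ with matching first derivatives, so weak and pointwise second derivatives coincide. Since $\int|k_S(\gamma_h)|_g^p\,ds=\mathbf{k}_p(P_h)\le M$, the sequence $\{\gu_h\}$ is bounded in $W^{2,p}((0,L),\R^m)$.

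\textbf{Passage to the limit.} Since $p>1$, reflexivity gives a subsequence with $\gu_h\rightharpoonup \mathbf v$ weakly in $W^{2,p}$ and $\gu_h\to \mathbf v$ in $C^1$. Because $d_F(\gamma_h,c)\to0$ and the parameterizations are constant speed with speeds converging to $1$, the limit is the arc-length parameterization: $\mathbf v=\mathbf j\circ c$. This needs care, since Fréchet convergence plus length convergence plus constant speed forces uniform convergence of the arc-length parameterizations. Hence $c\in W^{2,p}$.

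For the estimate, $C^1$ convergence gives uniform convergence of the tangential projections $\Pi_{\gu_h(s)}\to\Pi_{\gu(s)}$ onto the tangent planes. Therefore
\[
\ddot\gu_h^\top=\Pi_{\gu_h}\ddot\gu_h\rightharpoonup \Pi_{\gu}\ddot\gu=\ddot\gu^\top \quad\text{weakly in } L^p.
\]
Lower semicontinuity of the $L^p$ norm, with the factor $(L_h/L)^{2p-1}\to1$, gives
\[
\int_0^L|k_S(c)|_g^p\,ds\le\liminf_h \mathbf{k}_p(P_h)=\mathcal F_p(c).
\]

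\textbf{Main obstacle.} The delicate step is identifying the limit with $c$ in arc length, using only Fréchet closeness of the inscribed curves. We handle it with the length convergence $L_h\to L$ together with the equality case of length lower semicontinuity.
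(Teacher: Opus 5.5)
Your proposal is correct and follows the same route as the paper. Both take a recovery sequence with $\mathbf{k}_p(P_h)\to\mathcal{F}_p(c)$, show $d_F(\gamma(P_h),c)\to0$ and $L_h\to L$, rescale to $[0,L]$, use extrinsic $W^{2,p}$ compactness through $\mathbf{j}$, and conclude by weak lower semicontinuity of $\int_0^L\|\ddot{\mathbf{u}}_h^\top\|^p\,ds$ with the factor $(L_h/L)^{2p-1}\to1$. One small point: the bound $R_i\theta_i\lesssim\delta$ only controls $L_h$ from above, so for $L_h\to L$ you also need lower semicontinuity of length under $d_F$-convergence, which is exactly how the paper argues, combined with $\mathcal{L}(\gamma(P_h))\le\mathcal{L}(P_h)\le L$.

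You are in fact more careful than the paper at several points. Reflexivity plus the compact embedding $W^{2,p}\hookrightarrow C^1$ replaces the paper's claimed strong $W^{1,1}$ convergence of $\dot\gamma_h$. You get weak $L^p$ convergence of $\ddot{\mathbf{u}}_h^\top$ from uniform convergence of the tangent projections, whereas the paper appeals to a.e.\ convergence. Finally, you justify that the limit is $\mathbf{j}\circ c$ via Fréchet convergence, $L_h\to L$ and constant speed, which the paper simply asserts.
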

\begin{proof}
%It is a straightforward readaptation of the one of Theorem 5.3 from \cite{MSS25}, where this time the curve $\gamma(P_h)$ is associated to the inscribed polygonal $P_h$ as in the previous construction, see Remark \ref{R:pol}, and the term ...
Let $\{P_h\}$ be a sequence of polygonal curves inscribed in $c$ satisfying $\mu_c(P_h) \rightarrow 0$ and $\mathbf{k}_p(P_h) \rightarrow \mathcal{F}_p(c)$. For each $h$, let $c_{P_h} : [0, L_h] \to S$ be the arc-length parametrization of the piecewise smooth and $\mathcal{C}^1$ curve $\gamma(P_h)$ obtained in  Proposition \ref{PgammaP}, where $L_h \coloneqq\nobreak \mathcal{L}(\gamma(P_h))$.
We first notice that
\begin{equation}
\label{LhL}
\lim_{h\to \infty} L_h=L\,.
\end{equation}

In fact, we have $d_{F}(\gamma(P_h),P_h)\leq\mu_c(P_h)$ for every $h$, whereas $d_{F}(P_h,c)\rightarrow 0$. Since
$\mu_c(P_h) \rightarrow 0$, we obtain $d_{F}(\gamma (P_h), c) \rightarrow 0$, and hence by lower semicontinuity we infer that
$$
\mathcal{L}(c)\leq\liminf_{h\rightarrow +\infty}{\mathcal{L}(\gamma(P_h))} \,.
$$
Using the fact that $\mathcal{L}(\gamma (P_h )) \leq \mathcal{L}(P_h ) \leq \mathcal{L}(c)$ for every $h$, we deduce the limit in \eqref{LhL}.

Since it is easier to make computations on extrinsic objects, we let
$$ \gc_{P_h}:={\bf j}\circ c_{P_h} : [0, L_h] \to {\bf j}(S)\subset \R^m , $$
and let $ \gamma_h : [0, L]\rightarrow {\bf j}(S)$ be given by
$$
\gamma_h(s)\coloneqq \gc_{P_h}(sL_h/L)\,.
$$
Notice that by the construction it turns out that each $\dot \gamma_h$ is in the Sobolev space $W^{1,1}(I_L, \R^m)$.

By piecewise smoothness, apart from a finite set of points,
one has
%$k_{\mathbb{S}^2}(c_{P_h}(\lambda))$
$$\norm{ k_{{\bf j}(S)}(c_{P_h}(\lambda)) } = \norm{\Ddot{\gc_{P_h}}(\lambda)^\top}$$
for $\lambda \in [0, L_h]$ and
 $$\Ddot{\gamma}_h(s)^\top =
    (L_h/L)^2\Ddot{\gc_{P_h}}(\lambda)^\top\,,\quad
\Ddot{\gamma}_h(s)^\perp =
    (L_h/L)^2\Ddot{\gc_{P_h}}(\lambda)^\perp
$$
for $s \in I_L$, with $\lambda = s(L_h/L)$.
Therefore, we can write
\begin{equation}\label{Kp}
        \mathbf{k}_p(P_h)\coloneqq \int_{\gamma(P_h)} |k_{S}(\gamma(P_h))|_g^p\,ds=
\int_0^{L_h}\norm{  \Ddot{\gc_{P_h}}(\lambda)^\top}^p\,d\lambda =
\left(\frac L{L_h}\right)^{2p-1} \!\! \int_0^{L}\norm{  \Ddot{\gamma}_h(s)^\top}^p\,d s\,.
    \end{equation}
As a consequence, recalling that $\mathbf{k}_p(P_h) \rightarrow \mathcal{F}_p(c)$, by \eqref{LhL} and \eqref{Kp} we obtain
\begin{equation}
\label{limit}
       \lim_{h\rightarrow \infty } \int_0^{L}\norm{  \Ddot{\gamma}_h(s)^\top}^p\,d s=\mathcal{F}_p(c) \,.
\end{equation}

Now, since $p > 1$, the sequence $\{\Dot{\gamma}_h\}$ converges strongly in $W^{1,1}$ to some function $v \in W^{1,1}(I_L, \R^m)$. By using that $\{ \gamma_h\} $ converges to the Lipschitz function $\gc={\bf j}\circ c$ strongly in $L^1(I_L, \R^m)$, we obtain $v = \Dot{\gc}$ a.e.; hence, possibly passing to a (not relabeled) subsequence, $\{\Dot{\gamma}_h\}$ converges to $\Dot{\gc}$ weakly in $W^{1,p}(I_L, \mathbb{R}^m)$. In particular, $\Dot{\gc} \in W^{1,p}(I_L,\R^m) $ and
$\{ \Ddot{\gamma}_h\}$ converges to $\Ddot{\gc}$ weakly in $L^p$.
Therefore, the curve $\gc$ is in $W^{2,p}(I_L,{\bf j}(S))$ and hence  $c\in W^{2,p}(I_L,S)$.

Furthermore, using that both the convergence of $\gamma_h$ to $\gc$ and of $\dot\gamma_h$ to $\dot\gc$ are uniform, we infer that for every $s$ the tangent planes at $\gamma_h$ converge to the tangent plane at $c(s)$ and the same for the conormal. We thus deduce that 
$\{ \Ddot{\gamma}^\top_h\}$ converges to $\Ddot{\gc}^\top$ almost everywhere and hence weakly in $L^p$, as $h\rightarrow \infty$.

%Since the normal component $\Ddot{\gc}^\perp$ is uniformly bounded by a constant only %depending on $S$, we infer that the curve $\gc$ is in $W^{2,p}(I_L,{\bf j}(S))$ and hence %that $c\in W^{2,p}(I_L,S)$. 
Finally, by lower semicontinuity, using \eqref{limit} we obtain
$$
\int_0^L|k_S(c)|_g^p\, ds = \int_0^L\norm{\Ddot{\gc}^\top(s)}^p\, ds\leq
\liminf_{h\rightarrow \infty }\int_0^{L}\norm{  \Ddot{\gamma}_h(s)^\top}^p\,d s
=\mathcal{F}_p(c)
$$
and the proof is complete.
\end{proof}
\subsection{Lower bound estimate}
Using arguments from the previous sections, we readily obtain the following
\begin{thm}\label{lowerbound}
Let $c$ be a rectifiable and open curve in $S$ parameterized by arc length and of class  $W^{2,p}([0,L],S)$ for some $p>1$.
Then,  for every $\epsilon>0$ small, there exists a polygonal $P_\epsilon$ inscribed in the curve $c$ and a
positive constant $C_\epsilon$ such that $\mu_c(P_\epsilon) \rightarrow 0 $, $C_\epsilon\rightarrow 1$
as $\epsilon\rightarrow 0$ and
\begin{equation}
\label{lbdP}
 \tilde{C}_\epsilon(1+g_0(\epsilon))\mathbf{k}_p(P_{\epsilon})+g_2(\epsilon)-g_1(\epsilon)  \leq \int_0^L\vert k_{S}(c(s)) \vert_g^p\,ds<\infty,
\end{equation}
where $g_i(\epsilon)\rightarrow 0$ as $\epsilon\rightarrow 0$, for $i=0,1,2$.
Therefore, we have
\begin{equation}
\label{lowerboundd}
\mathcal{F}_p(c)\leq\int_0^L\vert k_{S}(c(s))\vert_g^p\, ds.
\end{equation}
\end{thm}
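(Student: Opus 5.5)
The plan is to build $P_\epsilon$ as an equilateral polygonal inscribed in $c$ with small modulus. Then Theorem \ref{Teo1} bounds $\mathbf{k}_p(P_\epsilon)$ from above by a sum of the form $\sum_i\delta^{1-p}\theta_i^p$, and the lower estimate of Section \ref{Sec:anglebound}, namely \eqref{stima-basso}, bounds that same sum from below by the curvature integral of $c$.

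\emph{Step 1: construction of $P_\epsilon$.} Fix $\epsilon>0$ and choose $\delta\le\delta_1(\epsilon,S)$ with $\delta\to0$ as $\epsilon\to0$. Pick vertices $p_i=c(t_i)$ successively, so that consecutive vertices are at geodesic distance $\ell$, with $\ell$ comparable to $\delta$. The last edge may be shorter; to handle it, either adjust $\ell$ by continuity so that the final vertex is $c(L)$, or accept one shorter edge and use the non-equilateral variant of \eqref{kd}. For $\delta$ small, the arcs of $c$ between consecutive vertices have diameter $O(\delta)$, since $c$ is $C^1$ (because $c\in W^{2,p}$). Hence $\mu_c(P_\epsilon)\to0$.

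\emph{Step 2: upper bound from Theorem \ref{Teo1}.} Theorem \ref{Teo1} gives
\[
\mathbf{k}_p(P_\epsilon)\le C_\epsilon\sum_{i=1}^{k-1}\delta^{1-p}\theta_i^p(1+o(1))+g(\epsilon).
\]

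\emph{Step 3: lower bound from Section \ref{Sec:anglebound}.} Rerun the Bruckstein-type averaging argument there. By \eqref{stima-basso} and \eqref{stima-lati-angolis}, that is $\tilde\ell_i=\delta+o(\delta)$ and $\tilde\theta_i=\theta_i+o(\delta)$, we get
\[
\int_0^L|k_S(c)|_g^p\,ds\ \ge\ \tilde C_\epsilon\sum_i\delta^{1-p}\bigl(\theta_i+o(\delta)\bigr)^p-g_2(\epsilon).
\]
The next task is to replace $(\theta_i+o(\delta))^p$ by $\theta_i^p$ up to a vanishing error. Apply Proposition \ref{stima_p} with $a=\theta_i$ and $b=o(\delta)$. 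Using the bound $\theta_i\le C\delta^{(p-1)/p}$ from \eqref{stima_angolo-lato} together with $k\le L/\delta$, the cross terms sum to something of order
\[
\sum_i\delta^{1-p}\delta^{(p-1)^2/p}\,o(\delta)=k\,\delta^{1/p}o(1)\to0 .
\]
This produces the error $g_1(\epsilon)$.

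\emph{Step 4: combine.} Insert the sum from Step 3 into the inequality of Step 2. This gives \eqref{lbdP}, where $(1+g_0(\epsilon))$ absorbs $C_\epsilon^{-1}$ and the $(1+o(1))$ factors. Letting $\epsilon\to0$, we have $\mu_c(P_\epsilon)\to0$, so $\{P_\epsilon\}$ is admissible in the definition of $\mathcal F_p(c)$. Therefore $\mathcal F_p(c)\le\liminf\mathbf{k}_p(P_\epsilon)\le\int_0^L|k_S(c)|_g^p\,ds$, which is \eqref{lowerboundd}.

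The main obstacle I expect is Step 3: the errors $o(\delta)$ in the angles must be uniform in $i$, and their accumulation over $k\sim L/\delta$ vertices must still vanish. The bound $\theta_i\lesssim\delta^{(p-1)/p}$ has to be used carefully, since a naive $o(\delta)$ per vertex summed against $\delta^{1-p}$ might not be small when $p\ge2$. If needed, I would first try to show that the expansion \eqref{expansion} actually gives $O(\delta^2)$ error terms, using the uniform Hessian bound \eqref{Heis}. This would give an error of order $k\delta^{1-p}\theta_i^{p-1}\delta^2\lesssim\delta^{1/p}$.
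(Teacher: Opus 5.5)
Your overall route is the paper's. You take an equilateral $P\ll c$ of small modulus, bound $\mathbf{k}_p(P)$ from above by Theorem~\ref{Teo1}, bound $\int_0^L|k_S(c)|_g^p\,ds$ from below by the same sum $\sum_i\delta^{1-p}\theta_i^p$ through \eqref{stima-basso} and \eqref{stima-lati-angolis}, and conclude along a sequence $\epsilon_h\to0$.

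However, the error count in your Step~3 is wrong as written. Each cross term is $\delta^{1-p}\theta_i^{p-1}o(\delta)\lesssim\delta^{1/p}o(1)$. But there are $k\approx L/\delta$ of them, so the sum is of order $L\,\delta^{1/p-1}o(1)$. This need not tend to zero for any $p>1$; the problem is not confined to $p\ge2$. So the bare additive $o(\delta)$ in \eqref{stima-lati-angolis} does not close the argument, and the fallback you mention at the end is required, not optional.

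The fallback does work. By \eqref{normalizzazione} and \eqref{Heis} you have $|\nabla\lambda_{p_i}(x)|\le C_0|x|$. The geodesic equation in conformal coordinates then gives $\|\ddot\gamma_{p_i}(t)\|\le Ct$. Hence $\gamma_{p_i}(t)=t\,(d\phi_{p_i})_{p_i}v+O(t^3)$, with both tangent and chord directions off by $O(t^2)$, uniformly in $i$. Since $(d\phi_{p_i})_{p_i}$ is an isometry, this yields $\tilde\ell_i=\delta(1+O(\delta^2))$ and $|\tilde\theta_i-\theta_i|\le C\delta^2$. It also makes \eqref{stima_angolo-lato} follow rigorously from $\tilde\ell_i^{1-p}\tilde\theta_i^p\le C_3$. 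Next, $\tilde\theta_i^p\ge\theta_i^p-pC\,\theta_i^{p-1}\delta^2$, by convexity when $\theta_i>C\delta^2$ and trivially otherwise. Combined with \eqref{stima_angolo-lato}, the accumulated error is at most $C\,k\,\delta^{1-p}\delta^{(p-1)^2/p}\delta^2\le CL\,\delta^{1/p}$, which serves as your $g_1(\epsilon)$.

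The paper itself passes over this point by writing $\tilde\theta_i^p\ge\theta_i^p(1+o(\delta))$, which is unjustified when $\theta_i$ is small. With the sharpened expansion, your Step~3 is more careful than the source.

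One minor point on Step~1: use the continuity adjustment that makes $P$ exactly equilateral. Theorem~\ref{Teo1} is stated only for equilateral polygonals, so the shorter-last-edge variant is not covered by it.
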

% £
\begin{proof}
Fix $\epsilon>0$ small.
In Theorem \ref{Teo1} we have seen that
every equilateral polygonal $P$ inscribed in $c$, with small modulus, satisfies the $p$-rotation upper bound in equation \eqref{stima_energia_poly}, that is
$$
\mathbf{k}_p(P) \leq C_\epsilon\sum_{i=1}^{k-1} \delta^{-p}\theta_i^{p}(\delta+o(\delta))+g(\epsilon),
$$
where $g(\epsilon)\to 0$ as $\epsilon\to 0$.
Since moreover $\delta\to 0$ as $\epsilon\to 0$, we also have $o(\delta)=g_0(\epsilon)$.

Furthermore, using the computation with a reparametrization $\psi$ made in Section \ref{Sec:anglebound}, we have obtained  the inequality
$$  \int_0^L |k_{S}(c(s))|_g^p\, ds\underset{\eqref{stima-basso}}{\geq} \sum_{i=1}^{k-1} \tilde{C}_\epsilon\,\tilde\ell^{1-p}\,\tilde \theta_i^p-g_2(\epsilon)\underset{\eqref{stima-lati-angolis}}{\geq} \tilde{C}_\epsilon\sum_{i=1}^{k-1}\delta^{1-p}\theta_i^p(1+o(\delta))-g_2(\epsilon)\,.
$$

Putting the terms together, we find $P_\epsilon\ll c$ for which the inequality \eqref{lbdP} holds.
Finally, the inequality \eqref{lowerboundd} follows taking the sequence $\{P_{\epsilon_h}\}$, where $\epsilon_h\rightarrow 0^+$ and for each $\epsilon_h$ the polygonal $P_{\epsilon_h}$ is the one constructed before.
Further details are omitted.
\end{proof}

%\bibitem[R05]{Re}
%Yu.~Reshetnyak.
%\newblock The theory of curves in differential geometry from the viewpoint of
%  the theory of functions of a real variable.
%\newblock {\em Russian Math. Surveys}, 60:1165--1181, 2005.

%\bibitem[S06]{Su}
%John Sullivan.
%\newblock Curves of finite total curvature.
%\newblock {\em Discrete Differential Geometry}, 38:137--161, 2006.}


\begin{thebibliography}{ZZZ}
\normalsize{
\bibitem[AR12]{alexandrov2012general}V.~V. Alexandrov and Y.~G. Reshetnyak.
\newblock {\em General Theory of Irregular Curves}.
\newblock Mathematics and its Applications. Springer Netherlands, 2012.

\bibitem[BNR01]{Bruckstein01112001}
A.~M. Bruckstein, A.~N. Netravali, and T.~J. Richardson.
\newblock Epi-convergence of discrete elastica.
\newblock {\em Applicable Analysis}, 79(1-2):137--171, 2001.

%\bibitem[dC76]{doCarmo1976}Manfredo~P. do~Carmo.
%\newblock {\em Differential Geometry of Curves and Surfaces}.
%\newblock 1976.

%\bibitem[CFM10]{C}
%M.~Castrill{\'o}n L{\'o}pez, V.~Fern{\'a}ndez Mateos, and J.~Mu{\~n}oz Masqu{\'e}.
%\newblock Total curvature of curves in {R}iemannian manifolds.
%\newblock {\em Differential Geom. Appl.}, 28(2):140--147, 2010.

\bibitem[C55]{Ch} S.~S. Chern. 
\newblock An elementary proof of the existence of isothermal parameters on a surface. \newblock {\em Proc. Amer. Math. Soc.} {\bf 6}:771--782, 1955.

%\bibitem[D79]{D}B.~V. Dekster.
%\newblock Upper estimates of the length of a curve in a {R}iemannian manifold  with boundary.\newblock {\em J. Differential Geom.}, 14(2):149--166, 1979.

\bibitem[DK81]{DK} D.~M. DeTurck and J.~L. Kazdan.
\newblock Some regularity theorems in Riemannian geometry.
\newblock {\em Ann. Sci. \'Ecole Norm. Sup.} (4) {\bf 14} no.~3:249--260, 1981.

%\bibitem[M50]{Mil}J.~W. Milnor.
%\newblock On the total curvature of knots.
%\newblock {\em Ann. of Math.}, 52(2):248--257, 1950.

%\bibitem[M53]{Mi53}J.~W. Milnor.
%\newblock On total curvatures of closed space curves.
%\newblock {\em Mathematica Scandinavica}, 1:289--296, 1953.

\bibitem[MS21a]{MS2}
D.~Mucci and A.~Saracco.
\newblock The total intrinsic curvature of curves in {R}iemannian surfaces.
\newblock {\em Rend. Circ. Mat. Palermo (2)}, 70:521--557, 2021.

\bibitem[MS21b]{MS2b}
D.~Mucci and A.~Saracco.
\newblock Correction to: The total intrinsic curvature of curves in {R}iemannian
  surfaces.
\newblock {\em Rend. Circ. Mat. Palermo (2)}, 70:1137--1138, 2021.

\bibitem[MS23]{MS1}
D.~Mucci and A.~Saracco.
\newblock Weak elastic energy of irregular curves.
\newblock {\em Philos. Trans. Roy. Soc. A}, 381, 2023.

\bibitem[MSS26]{MSS25}
D.~Mucci, A.~Saracco and C.~Sopio.
\newblock Weak elastic energy of rectifiable curves in the sphere.
\newblock {\em Proc. R. Soc. A}, 482, 2026.}
\end{thebibliography}
\end{document}